\tikzstyle{vertex}=[circle, draw, inner sep=0pt, minimum size=3pt, fill=black]
\newcommand{\vertex}{\node[vertex]}
\begin{document}

\frontmatter         

\pagestyle{headings}  

\mainmatter             

\title{On the Conjecture of the Representation Number of Bipartite Graphs}

\titlerunning{On the Representation Number of Bipartite Graphs}

\author{Khyodeno Mozhui \and K. V. Krishna}

\authorrunning{Khyodeno Mozhui \and K. V. Krishna}

\institute{Indian Institute of Technology Guwahati, Assam, India,\\
	\email{k.mozhui@iitg.ac.in};
	\email{kvk@iitg.ac.in}}

\maketitle         

\begin{abstract}
While the problem of determining the representation number of an arbitrary word-representable graph is NP-hard, this problem is open even for bipartite graphs.  The representation numbers are known for certain bipartite graphs including all the graphs with at most nine vertices. For bipartite graphs with partite sets of sizes $m$ and $n$, Glen et al. conjectured that the representation number is at most $\lceil \frac{m+n}{4}\rceil$, where $m+n \ge 9$. 

\qquad In this paper,  we show that every bipartite graph is $\left( 1+ \lceil \frac{m}{2} \rceil  \right)$-representable, where $m$ is the size of its smallest partite set. Furthermore, if $m$ is odd then we prove that the bipartite graphs are $\lceil \frac{m}{2} \rceil $-representable. Accordingly, we establish that the conjecture by Glen et al. holds good for all bipartite graphs leaving the bipartite graphs whose partite sets are of equal and even size. In case of the bipartite graphs with partite sets of equal and even size, we prove the conjecture for certain subclasses using the neighborhood inclusion graph approach.   
\end{abstract}

\keywords{Word-representable graphs, bipartite graphs, representation number, neighborhood graphs}

\section{Introduction}

A word over finite set $A$ is a finite sequence of elements of $A$, and written by juxtaposing the symbols of the sequence. A subword $u$ of a word $w$, denoted by $u \ll w$, is a subsequence of the sequence $w$. For instance, $abaa \ll aabbaba$. When $u$ is not a subword of $w$, we write $u \not\ll w$. Given a word $w$ over $A$ and a subset $B$ of $A$, the subword obtained from $w$ by removing all letters of $A$ except the letters of $B$ is denoted by $w|_{B}$. For example, if $x = abbcbaccb$ then $x|_{{\{a, b\}}} = abbbab$.  The reversal of a word $w$, denoted by $w^R$, is a word in which the letters are written in reverse order. For the previous example, $x^R = bccabcbba$.  We say the letters $a$ and $b$ alternate in a word $w$, if $w|_{\{a, b\}}$ is either $abab\cdots$ or $baba\cdots$ (of even or odd length). A word $w$ is said to be $k$-uniform if the number of occurrences of each letter in $w$ is $k$.

A simple graph (i.e., without loops or parallel edges) $G$ is said to be word-representable if it can be represented by a word $w$ over its vertices such that any two vertices in $G$ are adjacent  if and only if they alternate in $w$. The word $w$ is called a word-representant of the graph $G$. This notion covers many important classes of graphs, including comparability graphs, cover graphs, circle graphs and 3-colorable graphs. The word-representable graphs are also interesting from the point of view of computation. For example, the maximum clique problem is solvable in polynomial time for word-representable graphs. In fact, determining whether a graph has a word-representant is NP-complete. Since the introduction \cite{kitaev08order}, the word-representable graphs have been extensively studied and the literature has several important contributions to their theory along with their connections to various concepts. For a comprehensive introduction, one may refer to the monograph by Kitaev and Lozin \cite{kitaev15mono}.

A word-representable graph $G$ has a $k$-uniform word-representant, for some $k$ \cite{kitaev08}. In which case, the graph is known as $k$-word-representable or $k$-representable. The representation number of a word-representable graph $G$, denoted by $\mathcal{R}(G)$, is the minimum number $k$ such that the graph is represented by a $k$-uniform word. It can be observed that the complete graphs have the representation number one. It was proved that the circle graphs have representation number at most two, while determining the representation number of an arbitrary word-representable graph is NP-hard  \cite{halldorsson11}. Nevertheless, the representation numbers of certain graphs were determined in the literature. For example,  the representation number of any graph belonging to prisms (\cite{kitaev13}), melon graphs (\cite{KM_KV_2024}) and split graphs (\cite{tithi_2025}) is at most three, while the representation number of an $n$-vertex crown graph is $\lceil \frac{n}{4} \rceil$ (\cite{glen18}) and it is $O(\frac{\log k}{\log \log k})$ for a $k$-cube (\cite{hefty_2024}).

It was shown in \cite{kitaev08order} that bipartite graphs are word-representable. There have been attempts to investigate the representation number of bipartite graphs, while it is determined for a few subclasses like trees, crown graphs and hypercubes. Glen et al. conjectured in \cite{glen18} that the representation number of a bipartite graph with partite sets of sizes $m$ and $n$ is at most $\lceil \frac{m+n}{4} \rceil$, for $m+n \ge 9$. Following which, in \cite{Akgun_2019}, Akg\"{u}n et al. conjectured that the crown graph\footnote{A graph which is obtained by removing a perfect matching from a complete bipartite graph $K_{n,n}$.} $H_{n,n}$ has the highest representation number among all bipartite graphs on $2n$ vertices.

In an attempt to prove the conjecture by Glen et al. on the representation number of bipartite graphs, in this paper, first we show that the representation number of a bipartite graph is at most $1 + \lceil \frac{m}{2}\rceil$, where $m$ is the size of its smallest partite set. Further, we improve this bound for various special classes of bipartite graphs. Using these results, we prove that the conjecture holds good for all bipartite graphs, leaving the bipartite graphs with partite sets of equal and even size. Consequently, we partially establish the conjecture of \cite{Akgun_2019} that a crown graph $H_{n,n}$ has the highest representation number among the bipartite graphs on $2n$ vertices, when $n$ is odd.

The representation number of a disconnected graph is equal to the maximum of the representation numbers of its connected components \cite{kitaev08}. Hence, it is sufficient to focus on determining the representation number of a connected graph. A graph is said to be  reduced if no two vertices have the same neighborhood. In this work, the neighborhood (i.e., the set of all adjacent vertices) of a vertex $a$ is denoted by $N(a)$. The representation number of a bipartite graph equals the representation number of its reduced graph that is obtained by considering only those vertices with distinct neighborhoods (and removing the remaining vertices) \cite{kitaev13}. Throughout this paper, unless specified otherwise, a bipartite graph is considered to be connected and reduced, and it is denoted by $G = (A \cup B, E)$, where $A$ and $B$ are partite sets of sizes $m$ and $n$, respectively, with $m \le n$. 

The paper has been organized as follows. In Section \ref{const_single}, we prove that every bipartite graph is $\left( 1+ \lceil \frac{m}{2} \rceil  \right)$-representable. Accordingly, we verify the conjecture by Glen et al. for the bipartite graphs with $n \ge m+3$. If $n = m+1$ or $m+2$, we also show that the conjecture follows from the same result when $m$ is even. In order to address the conjecture for the remaining cases, in Section \ref{odd_size}, we focus on the bipartite graphs with a partite set of odd size and show that the conjecture holds for these graphs. In case of bipartite graphs with partite sets of equal and even size, we prove the conjecture for certain  subclasses using the neighborhood inclusion graph approach in Section \ref{neighborhood}.      

\section{Bipartite graphs are $\left( 1+ \lceil \frac{m}{2} \rceil  \right)$-representable}\label{const_single}

In this section, we construct a uniform word  that represents a given bipartite graph. As a result, we obtain an upper bound on its representation number.

Let $G$ be a bipartite graph with bipartition $A = \{a_1,a_2,\ldots,a_m\}$ and $B= \{b_1,b_2,\ldots,b_n\}$, where $m\le n$.  Through the following steps, we construct a $\left( 1+ \lceil \frac{m}{2} \rceil  \right)$-uniform word-representant of $G$. 
 
\begin{enumerate}[label=\Roman*.]
	\item  Partition the partite set $A$ into pairs. If $m$ is odd then consider the set $A \cup \{a_{m+1}\}$, where $N(a_{m+1}) = \varnothing$. Accordingly, let $m$ be even and partition the set $A$ into $ \frac{m}{2} $ pairs, say, 
	$$(a_1,a_2), (a_3,a_4), \ldots, (a_{m-1},a_m).$$
	
	\item {\bf Word $w_{(i,j)}$:} For any pair $(a_i,a_j)$, construct a word $w_{(i,j)}$ over the set $\{a_i, a_j\} \cup B$ given by
	\[w_{(i,j)} = B_{*i}   a_i   B_i  a_j   w_{(i,j)}'  a_i   B_j'   a_j   B_{*j}'\]
	 where,
	\begin{enumerate}[label=\roman*.]		
		\item $B_{*i}B'_{*j}$: A permutation on the vertices of $B$ which are not adjacent to any vertex of $\{a_i, a_j\}$.
		\item $B_{i}$: A permutation on the neighbors of $a_i$ but not of $a_j$, i.e.,  $N(a_{i}) \setminus N(a_{j})$.
		\item $w_{(i,j)}'$: A permutation on the common neighbors of $a_i$ and $a_j$, i.e., $N(a_{i}) \cap N(a_{j})$. 
		\item $B_{j}'$: A permutation on the neighbors of $a_j$ but not of $a_i$, i.e.,  $N(a_{j}) \setminus N(a_{i})$. 
	\end{enumerate}
	We call the words $B_{*i}, B_i, w_{(i,j)}', B_j'$ and $B_{*j}'$ as blocks of the word $w_{(i,j)}$.
		
	\item  {\bf Construction of $D_{({p}:q)}$:} 
	Consider two consecutive pairs, say $(a_p, a_{p+1})$ and $(a_{q-1}, a_{q})$, for $1 \le p < m-1$; i.e., $p + 2 = q - 1$. If $p = m-1$ then consider $q = 2$, i.e., the last and first pairs. Avoiding these two pairs, from the remaining pairs, consider the word with second components in the sequence followed by the word with the first components in the sequence to construct the word $D_{(p:q)}$. That is, for various values of $p$, the words $D_{p, q}$ are as per the following:
	\[D_{(p:q)}=  
	\begin{cases}
		{a_6}{a_8}  \cdots  {a_{m}}{a_{5}}{a_{7}} \cdots  {a_{m-1}}, \text{ if } p = 1\; (\text{by removing first two pairs});\\
		{a_2}{a_4}  \cdots  {a_{m-4}}{a_{1}}{a_{3}} \cdots  {a_{m-5}}, \text{ if } p = m-3\; (\text{by removing last two pairs});\\
		{a_4}{a_6}  \cdots  {a_{m-2}}{a_{3}}{a_{5}} \cdots  {a_{m-3}}, \text{ if } p = m-1\; (\text{by removing the last and first pairs});\\
		{a_2}{a_4}  \cdots  {a_{p-1}}{a_{q+2}} \cdots  {a_{m}}   {a_{1}}{a_{3}} \cdots  {a_{p-2}}{a_{q+1}} \cdots a_{m-1}, \text{else.} 
	\end{cases}
	\]

	\item {\bf Construction of $w_0$:} Consider  the reversal of the word $w_{(1,2)}$ restricted to the vertices of $B$. Then  construct the permutation $w_0$ on the set $A \cup B$ by
	$$w_0 = a_2  a_1  w_{(1,2)}^R|_{B}  a_4   a_3   \cdots   {a_m}  a_{m-1}.$$
	
	\item {\bf Construction of $w$:} Finally, construct a $\left( 1 +  \frac{m}{2}  \right)$-uniform word
	\[w = w_{(1,2)}  D_{(1:4)}    w_{(3,4)}  D_{(3:6)}w_{(5,6)}   \cdots   w_{(m-1,m)}  D_{(m-1:2)}  w_0\]	
	over the set $A \cup B$.	
\end{enumerate}

\begin{remark}
	Note that if $m$ is odd, we construct the word $w$ over the vertex set $A\cup B \cup \{a_{m+1}\}$. It is evident that the word obtained by removing all the occurrences of the vertex $a_{m+1}$ from $w$ is a word-representant of the bipartite graph $G$.
\end{remark}

A demonstration of the construction is given in Appendix \ref{demo}.

\subsubsection{Observations and Remarks.}
We present some remarks on the occurrences of each vertex of $G$ in the word $w$ for better understanding on the construction of $w$. Let $a^{(i)}$ denote the $i$th occurrence of a vertex $a$ in the word $w$ (from left to right).

\begin{remark}\label{place_b}
	Each vertex $b \in B$, we have, for all $ 1 \le s \le \frac{m}{2}$,  $b^{(s)} \ll w_{(2s-1,2s)}$, and $b^{(1+ \frac{m}{2})} \ll w_0$. 
\end{remark}

\begin{remark}\label{place_a}
	For all $1 \le s \le \frac{m}{2}$, the word $w_{(2s-1,2s)}$ contains only the vertices $a_{2s-1}$ and $ a_{2s}$ from the partite set $A$. For each $a \in A$,
	\begin{itemize}
		\item[--] $a \in \{a_{1}, a_{2} \}$. We have
		\begin{align*}
			a^{(1)} a^{(2)} &\ll w_{(1,2)}\\
			\text{for all } 3 \le i \le \frac{m}{2}, \quad  a^{(i)} &\ll \begin{cases}
				D_{(2i-3:2i)}, & \text{ if } i\ne \frac{m}{2};\\
				D_{(m-1:2)}, &\text{ if } i = \frac{m}{2}
			\end{cases}\\
			a^{(1+ \frac{m}{2})} &\ll w_0.
		\end{align*}
	
		\item[--] $a \in \{a_{2s-1}, a_{2s} \}$, where $2 \le s \le \frac{m}{2}$. We have
		\begin{align*}
			\text{for all } i \in \{1,2, \ldots, s-2,s+1, \ldots, \frac{m}{2}\}, \quad  a^{(i)} &\ll \begin{cases}
				D_{(2i-1:2i+2)}, & \text{ if } i \ne \frac{m}{2};\\
				D_{(m-1:2)}, & \text{ if } i = \frac{m}{2}
			\end{cases}\\
			a^{(s-1)} a^{(s)} &\ll w_{(2s-1,2s)}\\
			a^{(1+ \frac{m}{2})} &\ll w_0.
		\end{align*}		
	\end{itemize} 
	
\end{remark} 

The remarks \ref{place_b} and \ref{place_a} are very handy in proving that the word $w$ represents $G$ in the following theorem.

\begin{theorem}\label{main-1}
	The word $w$ represents the bipartite graph $G$.
\end{theorem}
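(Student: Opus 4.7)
The plan is to verify, for every pair of distinct vertices $u, v \in A \cup B$, that $u$ and $v$ alternate in $w$ if and only if $uv \in E(G)$. Using the bipartition, this splits into three cases depending on whether $\{u, v\} \subseteq B$, $\{u, v\} \subseteq A$, or $\{u, v\}$ straddles the two partite sets; in the first two cases one must refute alternation, and in the third one must show alternation is equivalent to adjacency.

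For $u, v \in B$, Remark \ref{place_b} gives that $u$ and $v$ each appear once in every $w_{(2r-1, 2r)}$ and once in $w_0$, with no occurrences in the $D$-blocks. Let $\sigma_r \in \{0, 1\}$ record whether $u$ precedes $v$ in $w_{(2r-1, 2r)}$. Checking the junctions between consecutive $w$-blocks (noting that intervening $D$-blocks contain no vertex of $B$) shows that alternation forces $\sigma_1 = \sigma_2 = \cdots = \sigma_{m/2}$. But $w_0$ uses $w_{(1,2)}^R|_B$, which reverses the relative order of $u, v$ in the final component, so the junction between $w_{(m-1, m)}$ and $w_0$ carries a forced repetition and $u, v$ do not alternate.

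For $u, v \in A$ in the same pair, say $u = a_{2s-1}, v = a_{2s}$, the home $w_{(2s-1, 2s)}$ contributes the subword $u v u v$, every $D$-block containing both contributes $v u$ (in each $D_{(p:q)}$ the surviving even-indexed letters precede the surviving odd-indexed letters), and $w_0$ contributes $v u$; splicing produces a doubled letter at a junction with the home block. For $u, v \in A$ in distinct pairs, the home $w_{(2s-1, 2s)}$ contributes two consecutive $u$'s to $w|_{\{u, v\}}$, because $v$ is absent from this block and from its two flanking $D$-blocks $D_{(2s-3:2s)}$ and $D_{(2s-1:2s+2)}$. Either subcase places a repeated letter in $w|_{\{u, v\}}$, so $u, v$ fail to alternate.

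For $u = a \in A$ (in pair $s$) and $v = b \in B$, assemble $w|_{\{a, b\}}$ block by block. By Remarks \ref{place_b} and \ref{place_a}, the only block containing $a$ twice is the home $w_{(2s-1, 2s)}$, and the flanking $D$-blocks contain no $a$; the pre-home and post-home contributions each form an alternating run of $b$'s (one per $w$-block) and $a$'s (one per intervening $D$-block containing $a$), while $w_0$ contributes $a, b$ when $s = 1$ and $b, a$ otherwise. The home block contributes the three-letter subword $a b a$ if $b \in N(a)$, and otherwise contributes $b a a$ or $a a b$, depending on which of $B_{*i}, B_i, w_{(i,j)}', B_j', B_{*j}'$ contains $b$. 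Gluing yields an alternating full word iff the home contributes $a b a$, i.e., iff $b \in N(a)$; otherwise an $aa$ or $bb$ introduced by the home persists at a junction, breaking alternation. The main technical burden is this last case, particularly the boundary positions $s = 1$ and $s = m/2$, where the home sits at the extreme of $w$ and the wrap-around block $D_{(m-1:2)}$ behaves specially; the first two cases are largely bookkeeping, resting on the single reversal in $w_0$ and on the fixed ordering in $D_{(p:q)}$.
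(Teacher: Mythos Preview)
Your proposal is correct and follows the same three-case decomposition as the paper's proof; the paper splits the $A$--$B$ case into ``adjacent $\Rightarrow$ alternate'' and ``non-adjacent $\Rightarrow$ non-alternate'' while you handle both directions at once, but the underlying block-by-block bookkeeping is identical, including the reversal argument for $B$--$B$ pairs via $w_0$. One small slip to fix: in the $A$--$A$ distinct-pairs case you assert that $v$ is absent from the flanking blocks $D_{(2s-3:2s)}$ and $D_{(2s-1:2s+2)}$, which is false in general (each $D_{(p:q)}$ omits only two pairs, so $v$ survives unless it lies in the pair immediately before or after $u$'s); fortunately this claim is also unnecessary, since $v$'s absence from the home block $w_{(2s-1,2s)}$ alone already gives $uu \ll w|_{\{u,v\}}$.
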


\begin{proof}
	We show that for any two vertices $a, b \in A \cup B$, $a$ and $b$ are adjacent if and only if $a$ and $b$ alternate in $w$, so that $w$ is a word-representant of  the bipartite graph $G$.
		
	Suppose $a$ and $b$ are adjacent in $G$. Let $a \in A$ and $b \in B$. We deal this in the following two cases:
	\begin{itemize}
		\item[-]Case 1: $a \in \{a_1,a_2\}$: From remarks \ref{place_b} and \ref{place_a}, we have
		\begin{align*}
			a^{(1)}b^{(1)}a^{(2)} &\ll w_{(1,2)}  \\
			\text{for all } \ 3 \le i \le \frac{m}{2},\quad  b^{(i-1)}  &\ll w_{(2i-3,2i-2)} \text{ and }  a^{(i)} \ll \begin{cases}
				D_{(2i-3:2i)}, & \text{ if } i\ne \frac{m}{2};\\
				D_{(m-1:2)}, &\text{ if } i = \frac{m}{2}
			\end{cases} \\
			b^{(\frac{m}{2})} &\ll w_{{(m-1,m)}} \\
			a^{(1+\frac{m}{2})} b^{(1+\frac{m}{2})} &\ll w_0
		\end{align*}
		so that $a^{(1)} b^{(1)} a^{(2)} b^{(2)}  \cdots  a^{(1+\frac{m}{2})} b^{(1+\frac{m}{2})} \ll w$.
		
		\item[-]Case 2: $a \in \{a_{2s-1}, a_{2s}\}$, for $2 \le s \le \frac{m}{2}$. From remarks \ref{place_b} and \ref{place_a}, we have 
		\begin{align*}	
		\text{for each } \ i \in \{1,\ldots,s-2,s+1, \ldots, \frac{m}{2}\}, \quad b^{(i)} &\ll w_{(2i-1,2i)} \text{ and } a^{(i)}  \ll \begin{cases}
			D_{(2i-1:2i+2)}, & \text{ if } i \ne \frac{m}{2};\\
			D_{(m-1:2)}, & \text{ if } i = \frac{m}{2}
		\end{cases}\\
		b^{(s-1)} &\ll w_{(2s-3,2s-2)} \\
		a^{(s-1)} b^{(s)} a^{(s)} &\ll w_{(2s-1,2s)} \\
		b^{(1+\frac{m}{2})} a^{(1+\frac{m}{2})} & \ll w_0
		\end{align*}
		so that $\ b^{(1)}  a^{(1)} b^{(2)}  a^{(2)}  \cdots  b^{(1+\frac{m}{2})}  a^{(1+\frac{m}{2})} \ll w$.
	\end{itemize}
	Hence, $a$ and $b$ alternate in the word $w$.
	
	Conversely, suppose $a$ and $b$ are not adjacent in $G$. We show that, for some $j \ge 1$, one of the following words is a subword of $w$: $a^{(j)}b^{(j)} b^{(j+1)}a^{(j+1)}$, $b^{(j)}a^{(j)} a^{(j+1)} b^{(j+1)} $, $a^{(j)} a^{(j+1)} b^{(j)} b^{(j+1)}$,   $b^{(j)}b^{(j+1)} a^{(j)} a^{(j+1)}$. We handle this in the following three cases:
	\begin{itemize}
		\item[-] Case 1: $a,b \in A$. We have the following subcases:
		\begin{itemize}
			\item Subcase 1.1: $a, b$ belong to the same pair. For $1 \le s \le \frac{m}{2}$, let $a = a_{2s-1}$ and $b = a_{2s}$. 
			\begin{enumerate}[label=\roman*)]
				\item If $s = 1$ then note that $a^{(2)}b^{(2)} = a_1^{(2)}a_2^{(2)} \ll w_{(1,2)}$. Further, 
				\[a_2^{(3)}a_1^{(3)} \ll
				\begin{cases}
					w_0, & \text{ for } m \le 4;\\
						D_{(3:6)}, & \text{ for } m \ge  6,
				\end{cases}\]
				so that $a^{(2)}b^{(2)}b^{(3)}a^{(3)} \ll w$. 
				
				\item If $s = 2$ then note that $a^{(2)}b^{(2)} = a_3^{(2)}a_4^{(2)} \ll w_{(3,4)}$. Further, 
				\[a_4^{(3)}a_3^{(3)} \ll
				\begin{cases}
					w_0, & \text{ for } m \le 4;\\
					D_{(5:2)}, & \text{ for } m = 6;\\
					D_{(5:8)}, & \text{ for } m > 6,
				\end{cases}\]
				so that $a^{(2)}b^{(2)}b^{(3)}a^{(3)} \ll w$.
				\item If $s > 2$ then we have, $b^{(s-2)} a^{(s-2)}a^{(s-1)} b^{(s-1)} \ll D_{(2s-5:2s-2)} w_{(2s-1,2s)}\ll w$.
			\end{enumerate}
			
			\item Subcase 1.2: $a$ and $b$ belong to different pairs. 
			\begin{enumerate}[label=\roman*)]
				\item If one of $a, b$ is in the first pair. Without loss of generality, let $a \in \{a_{1}, a_{2}\}$. Then, as $b \notin \{a_{1}, a_{2}\}$, $b$ does not appear in $w_{(1,2)}$. Accordingly, we have $$a^{(1)}a^{(2)} \ll w_{(1,2)} \text{ so that }\; a^{(1)}a^{(2)}b^{(1)}b^{(2)} \ll w.$$
				\item If $a \in \{a_{2s-1}, a_{2s}\}$, for $2 \le s \le \frac{m}{2}$ then consider the following cases depending on whether $b$ is in the subsequent/preceding  pair or not:
				\begin{enumerate}
					\item If $b \in \{a_{2s+1}, a_{2s+2}\}$ then 
					\begin{align*}
						b^{(s-1)} &\ll D_{(2s-3:2s)}\\
						a^{(s-1)} a^{(s)} &\ll w_{(2s-1, 2s)}\\
						b^{(s)} b^{(s+1)} &\ll w_{(2s+1, 2s+2)}
					\end{align*}
					so that $b^{(s-1)}a^{(s-1)} a^{(s)}b^{(s)} \ll w$.
					
					\item If $b \notin \{a_{2s-3}, a_{2s-2},a_{2s+1}, a_{2s+2}\}$ then  we have $$b^{(s-1)} a^{(s-1)} a^{(s)} b^{(s)}\ll \begin{cases}
						 D_{(2s-3:2s)} w_{(2s-1,2s)} D_{(2s-1:2)}, &\text{ for } s=\frac{m}{2};\\
						  D_{(2s-3:2s)} w_{(2s-1,2s)} D_{(2s-1:2s+2)}, &\text{ for } s < \frac{m}{2},
					\end{cases}$$
					so that $b^{(s-1)} a^{(s-1)} a^{(s)} b^{(s)}\ll w$.
				\end{enumerate}
			\end{enumerate}
		\end{itemize} 
		
		\item[-] Case 2: $a \in A$ and $b \in B$. Let $a \in \{a_{2s-1}, a_{2s}\}$, for $1 \le s\le \frac{m}{2}$. 
		
		\begin{enumerate}[label=\roman*)]
			\item If $b \in N(a_{2s-1})$ then $b^{(s)} \ll B_{2s-1}  \ll w_{(2s-1,2s)} $.
			\begin{enumerate}[label=(\alph*)]
				\item  If $s = 1$ then $b^{(1)} a^{(1)} a^{(2)}   \ll w_{(1,2)}$ and $b^{(2)} \ll w_{(3,4)}$. Thus, $b^{(1)} a^{(1)} a^{(2)}  b^{(2)} \ll w$.
				\item If $s > 1$ then $b^{(s-1)} \ll w_{(2s-3,2s-2)}$ and $b^{(s)} a^{(s-1)} a^{(s)}   \ll w_{(2s-1,2s)}$. Thus, $b^{(s-1)} b^{(s)} a^{(s-1)} a^{(s)}   \ll w$. 
			\end{enumerate}
			
			\item If $b \in N(a_{2s})$ then $b^{(s)} \ll B_{2s}' \ll w_{(2s-1,2s)}$.
			\begin{enumerate}[label=(\alph*)]
				\item  If $s = 1$ then 	$a_1^{(1)} a_1^{(2)} b^{(1)} \ll w_{(1,2)} \text{ and } b^{(2)} \ll w_{(3,4)}$, so that $a^{(1)} a^{(2)} b^{(1)}b^{(2)} \ll w$.
				\item If $s > 1$ then $b^{(s-1)} \ll w_{(2s-3,2s-2)} \text{ and } a^{(s-1)} a^{(s)} b^{(s)}  \ll w_{(2s-1,2s)}$, so that $$b^{(s-1)}a^{(s-1)} a^{(s)} b^{(s)}  \ll w.$$
			\end{enumerate}	
			
			\item If $b \not\in N(a_{2s-1}) \cup N(a_{2s})$ then either $b^{(s)} \ll B_{*(2s-1)}$ or $b^{(s)} \ll B_{*(2s)}'$. 
			\begin{enumerate}[label=(\alph*)]
				\item If $s = 1$ then $b^{(1)} a^{(1)} a^{(2)}  \ll w_{(1,2)}$ or $ a^{(1)} a^{(2)} b^{(1)} \ll w_{(1,2)}$, and $b^{(2)} \ll w_{(3,4)}$. Thus, $$b^{(1)} a^{(1)} a^{(2)} b^{(2)} \ll w \text{ or } a^{(1)} a^{(2)} b^{(1)} b^{(2)}  \ll w.$$ 
				\item  If $s > 1$ then  $b^{(s-1)} \ll w_{(2s-3,2s-2)}$, and $b^{(s)}a^{(s-1)} a^{(s)}  \ll w_{(2s-1,2s)}$ or $a^{(s-1)} a^{(s)} b^{(s)} \ll w_{(2s-1,2s)}$.	 Thus, $$b^{(s-1)} b^{(s)} a^{(s-1)} a^{(s)} \ll w \text{ or } b^{(s-1)}a^{(s-1)} a^{(s)} b^{(s)} \ll w.$$
			\end{enumerate}
		\end{enumerate}
		
		\item[-] Case 3: $a,b \in B$.	Note that $w= w_{(1,2)}uw_0$, for some $u$. Clearly, $w_{(1,2)}|_{B} \ll w_{(1,2)}$, and by construction of $w_0$ we have, ${w_{(1,2)}^R}|_B \ll w_0$. Since $a$ and $b$ occur exactly once in $w_{(1,2)}$ (cf. Remark \ref{place_b}), without loss of generality, assume $ab\ll w_{(1,2)}$. Then $w|_{{\{a, b\}}} = ab \cdots ba$. Since $w$ is a uniform word, both $a$ and $b$ must occur the same number of times. Thus, two $b$'s occur consecutively in $w|_{{\{a,b\}}}$, so that $a$ and $b$ do not alternate in $w$. 
	\end{itemize}
	Hence, $a$ and $b$ do not alternate in the word $w$. 
	\qed
\end{proof}

\begin{corollary}\label{main_result}
	The representation number of a bipartite graph is at most  $1+ \lceil\frac{m}{2}\rceil$, where $m$ is the size of the smallest partite set.
\end{corollary}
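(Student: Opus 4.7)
The plan is to read off the bound directly from Theorem \ref{main-1} by verifying that the word $w$ constructed in the previous section is uniform with the claimed multiplicity. Since Theorem \ref{main-1} already guarantees that $w$ represents $G$, the only remaining task is to count the number of occurrences of each vertex in $w$ and check that this count is the same for every vertex.

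I would first handle the case where $m$ is even. By Remark \ref{place_b}, every $b \in B$ appears once in each of the $m/2$ pair-words $w_{(2s-1,2s)}$ and once more in $w_0$, giving a total of $1 + m/2$ occurrences. For a vertex $a \in A$, say $a \in \{a_{2s-1}, a_{2s}\}$, Remark \ref{place_a} locates the occurrences of $a$ as two copies inside $w_{(2s-1,2s)}$, one copy inside each of the $D$-blocks corresponding to the remaining $m/2 - 2$ pair-indices, and one copy inside $w_0$; this also sums to $1 + m/2$. Hence $w$ is $(1 + m/2)$-uniform, and $1 + m/2 = 1 + \lceil m/2 \rceil$ when $m$ is even, so Theorem \ref{main-1} yields $\mathcal{R}(G) \le 1 + \lceil m/2 \rceil$.

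When $m$ is odd, I would apply the construction to the extended partite set $A \cup \{a_{m+1}\}$ with $N(a_{m+1}) = \varnothing$, as indicated in Step I and the subsequent remark. The set $A \cup \{a_{m+1}\}$ has even size $m+1$, so the previous count gives a $(1 + (m+1)/2)$-uniform word over $A \cup \{a_{m+1}\} \cup B$ representing the augmented graph. Deleting every occurrence of $a_{m+1}$ leaves a word on $A \cup B$ that is still $(1 + (m+1)/2)$-uniform since $a_{m+1}$ has no neighbours, and it still represents $G$ because alternation among the remaining letters is preserved; noting $(m+1)/2 = \lceil m/2 \rceil$ gives the bound in the odd case as well. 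The whole argument is essentially bookkeeping on top of Theorem \ref{main-1}, so there is no substantive obstacle; the only point to be careful about is matching the vertex-occurrence counts coming from the pair-words, the $D$-blocks, and $w_0$, which is exactly what Remarks \ref{place_b} and \ref{place_a} were set up to provide.
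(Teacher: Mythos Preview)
Your proposal is correct and follows the same approach the paper intends: the paper states the corollary without proof, relying on Step~V of the construction (where $w$ is declared to be $\left(1+\frac{m}{2}\right)$-uniform) together with Theorem~\ref{main-1} and the remark handling odd $m$; your argument simply makes the uniformity count explicit via Remarks~\ref{place_b} and~\ref{place_a}, which is exactly the intended bookkeeping. One cosmetic point: the clause ``since $a_{m+1}$ has no neighbours'' is not the reason the restricted word stays uniform (deleting all copies of a single letter from any $k$-uniform word leaves a $k$-uniform word); that hypothesis is only needed to ensure the induced subgraph on $A\cup B$ is $G$ itself.
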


\begin{theorem}\label{conj_veri_1}
	Let $G$ be a bipartite graph. Then we have the following: \begin{enumerate}
		\item If $n \ge m+3$ then $\mathcal{R}(G) \le \lceil \frac{m+n}{4} \rceil$.
		\item If $m$ is even and $n \in \{m+1,m+2\}$ then $\mathcal{R}(G) \le \lceil \frac{m+n}{4} \rceil$.
	\end{enumerate} 
\end{theorem}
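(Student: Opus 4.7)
The plan is to derive this theorem as a direct arithmetic consequence of Corollary~\ref{main_result}, which already supplies the bound $\mathcal{R}(G) \le 1 + \lceil m/2 \rceil$. It therefore suffices to verify, in each of the two listed cases, the numerical inequality
\[
1 + \left\lceil \tfrac{m}{2} \right\rceil \;\le\; \left\lceil \tfrac{m+n}{4} \right\rceil.
\]
The key elementary fact I would use throughout is that, for an integer $k$, $\lceil y \rceil \ge k$ is equivalent to $y > k-1$.

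The proof proper splits on the parity of $m$. If $m$ is even, then $1 + \lceil m/2 \rceil = (m+2)/2$ is itself an integer, so the desired inequality reduces to $(m+n)/4 > m/2$, i.e. $n > m$, i.e.\ $n \ge m+1$. This is satisfied both under the hypothesis of claim~(1), where $n \ge m+3$, and under the hypothesis of claim~(2), where $n \in \{m+1, m+2\}$. If $m$ is odd, then $1 + \lceil m/2 \rceil = (m+3)/2$, again an integer, and the inequality reduces to $(m+n)/4 > (m+1)/2$, i.e.\ $n \ge m+3$. This is exactly the hypothesis of claim~(1); claim~(2) is vacuous here since it requires $m$ even. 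Combining these verifications with Corollary~\ref{main_result} yields $\mathcal{R}(G) \le 1 + \lceil m/2 \rceil \le \lceil (m+n)/4 \rceil$ in both scenarios.

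Since the substantive content has already been carried out in the construction of the uniform word $w$ and in Theorem~\ref{main-1}, there is no genuine obstacle at this stage: the theorem is, in effect, a translation of the bound $1 + \lceil m/2 \rceil$ into the conjectured form $\lceil (m+n)/4 \rceil$ on the range of $(m,n)$ where this translation is valid. The only point that deserves a brief word of care is the boundary behaviour of the ceiling, which is why the argument naturally splits by the parity of $m$ and why precisely the gap $n \ge m+3$ is needed in the odd case (the even case is more generous and tolerates $n \ge m+1$, which is what allows us to absorb the additional subcases $n \in \{m+1,m+2\}$ in part~(2)).
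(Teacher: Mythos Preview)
Your proposal is correct and follows essentially the same approach as the paper: both invoke Corollary~\ref{main_result} to obtain $\mathcal{R}(G) \le 1 + \lceil m/2 \rceil$ and then verify the arithmetic inequality $1 + \lceil m/2 \rceil \le \lceil (m+n)/4 \rceil$ by splitting on the parity of $m$. The only cosmetic difference is that the paper writes $m = 2k$ or $m = 2k - 1$ and computes the relevant ceilings explicitly, whereas you invoke the criterion $\lceil y \rceil \ge k \iff y > k-1$ to streamline the case analysis.
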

\begin{proof}$\;$
	\begin{enumerate}
		\item By Theorem \ref{main_result}, if $m = 2k$ or $m = 2k-1$, for some $k \ge 1$, we have $$ \mathcal{R}(G) \le 1 + \lceil \frac{m}{2} \rceil = 1 + k.$$
		
		On the other hand, if $m = 2k$  then
		$\lceil \frac{m+n}{4} \rceil \ge  \lceil \frac{4k+ 3}{4} \rceil = k + 1.$ Similarly, if $m = 2k -1$, $\lceil \frac{m+n}{4} \rceil \ge  \lceil \frac{4k+ 1}{4} \rceil = k + 1.$ 
		Hence, in any case, $\mathcal{R}(G) \le 1 + k \le \lceil \frac{m + n }{4} \rceil$.
		
		\item Let $m = 2k$, for some $k \ge 1$. Note that $\mathcal{R}(G) \le 1 + k.$ If $n = m + 1$ then $\lceil \frac{m+n}{4} \rceil = \lceil \frac{4k+ 1}{4} \rceil = k + 1.$ Similarly, if $n = m + 2$ then
		$\lceil \frac{m+n}{4} \rceil =  \lceil \frac{4k+ 2}{4} \rceil = k + 1$ so that $\mathcal{R}(G) \le 1 + k = \lceil \frac{m + n }{4} \rceil$.
	\end{enumerate}\qed
\end{proof}

\section{Bipartite graphs with partite set of odd size} \label{odd_size}

From results given in \cite{Akgun_2019,glen18}, we observe that, for graphs with at most nine vertices, the representation number of a bipartite graph is at most $1 + \lceil \frac{m}{2} \rceil$ and the upper bound is tight. In this section, for $m \ge 5$, we show that a bipartite graph is $\lceil \frac{m}{2} \rceil$-representable if $m$ is odd.

\begin{theorem}\label{odd_result}
	Let $G$ be a bipartite graph with bipartition $A = \{a_1,a_2, \ldots, a_m\}$ and $B = \{b_1, b_2, \ldots, b_n\}$, where $ 5 \le  m \le n$. If $m$ is odd then $G$ is $\lceil \frac{m}{2} \rceil$-representable.
\end{theorem}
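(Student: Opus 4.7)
The plan is to adapt the construction of Section~\ref{const_single} so as to save one occurrence per letter when $m$ is odd. Write $m=2k+1$ with $k\ge 2$; the target is a $(k+1)$-uniform word-representant, which is exactly one fewer per letter than what Section~\ref{const_single} produces after the dummy vertex $a_{m+1}$ is deleted. The extra occurrence in the Section~\ref{const_single} construction arises from the pair block $w_{(m,m+1)}$ built around the dummy together with the closing permutation $w_0$, so the essential task is to replace this combination by a lighter structure that still enforces the correct alternations.

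First, I would pair up the vertices $a_2,\dots,a_{2k+1}$ of $A$ as $(a_2,a_3),(a_4,a_5),\dots,(a_{2k},a_{2k+1})$ and treat $a_1$ as a singleton. For each pair $(a_{2i},a_{2i+1})$ I would keep the block $w_{(2i,2i+1)}$ exactly as in Section~\ref{const_single}, and for $a_1$ I would introduce a singleton block of the form $w_1=B_{*1}\,a_1\,B_1$, where $B_1$ is a permutation of $N(a_1)$ and $B_{*1}$ is a permutation of $B\setminus N(a_1)$. Each pair block then contributes two occurrences of each paired $a_i$ and one of each $b\in B$, while $w_1$ contributes one occurrence of $a_1$ and one of each $b\in B$.

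Second, I would design connector blocks $D'_{(p:q)}$ over $A$ in the spirit of Section~\ref{const_single} (concatenating second components followed by first components of certain pairs, with $a_1$ inserted into each connector), and choose the global arrangement $w=w_{(2,3)}\,D'_{(\cdot:\cdot)}\,\cdots\,w_{(2k,2k+1)}\,w_1$, or with $w_1$ placed between two pair blocks if the alternation argument so requires. A direct count yields $(k+1)$-uniformity: each $b\in B$ appears $k$ times across the pair blocks and once in $w_1$; each paired $a_i$ appears twice in its own pair block and $k-1$ times across the connectors; and $a_1$ appears once in $w_1$ together with $k$ times across the connectors. The exclusion pattern of the $D'_{(p:q)}$ must be redesigned from that of Section~\ref{const_single}, since there are now $k$ pair blocks but $k+1$ content blocks to juggle.

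Third, I would verify alternation following the scheme of the proof of Theorem~\ref{main-1}. For $b\in B$ adjacent to a paired $a_i$, Cases~1 and~2 of that proof largely carry over. For $b\in B$ adjacent to $a_1$, I would separately confirm alternation through $w_1$ and the $a_1$-occurrences inside the connectors. For non-adjacent pairs involving $a_1$, the appropriate subwords (such as $a_1\,b\,b\,a_1$ or $b\,a_1\,a_1\,b$ with suitable indexing of occurrences) must be exhibited, using the position of $a_1$ relative to $B_1$ and $B_{*1}$ inside $w_1$.

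The main obstacle will be the alternation verification involving the singleton vertex $a_1$. Since $a_1$ lives in a singleton block rather than a pair block, its $k$ further occurrences inside the connectors must be placed so as to alternate with all of $N(a_1)$ but not with $B\setminus N(a_1)$, and the position of $w_1$ in the overall concatenation must be compatible with these constraints simultaneously. A secondary technical obstacle is the redesign of the connectors $D'_{(p:q)}$: they must be tuned so that each paired $a_i$ appears exactly $k-1$ times and $a_1$ exactly $k$ times across them, which requires a different exclusion pattern than in Section~\ref{const_single}.
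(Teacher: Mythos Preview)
Your proposal has a genuine gap: you never address non-alternation between two vertices $b,b'\in B$, and this is precisely the crux of the argument once $w_0$ is removed. In the Section~\ref{const_single} construction, the \emph{only} mechanism preventing $b$ and $b'$ from alternating is that $w_0$ contains $w_{(1,2)}^R|_B$. You discard $w_0$ and replace it by the singleton block $w_1=B_{*1}\,a_1\,B_1$, but this block has a forced relative order between $B_{*1}$ and $B_1$: if $b\in B_{*1}$ and $b'\in B_1$ then necessarily $bb'\ll w_1$. If in addition the pair $(b,b')$ happens to land in the same relative order in every pair block $w_{(2i,2i+1)}$ (which can occur whenever the block structure of $w_{(2i,2i+1)}$ forces it, e.g.\ $b\in B_{*2i}$ and $b'\in B'_{2i+1}$), then $b$ and $b'$ alternate in your word and you represent a spurious edge. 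No amount of redesigning the connectors $D'_{(p:q)}$ helps here, since the connectors contain no letters of $B$. You also misidentify the main obstacle: the $a_1$--$B$ interactions you worry about are in fact manageable (placing $w_1$ first with a connector containing $a_1$ immediately after already gives $b\,a_1\,a_1\,b\ldots$ for $b\in B_{*1}$ and $a_1\,b\,a_1\,b\ldots$ for $b\in B_1$), whereas the $B$--$B$ case needs a genuinely new idea.

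The paper's proof takes a different and cleaner route: it keeps the dummy vertex $a_{m+1}$ and all $\lceil m/2\rceil$ pair blocks from Section~\ref{const_single}, and shows that the trailing permutation $w_0$ can simply be deleted. The point is that the internal orders within the sub-blocks $B_{*i},B_i,w'_{(i,j)},B'_j,B'_{*j}$ were arbitrary in Section~\ref{const_single}; the paper now chooses them carefully (together with the left/right placement of each $b$ into $B_{*i}$ versus $B'_{*j}$ when $b$ is non-adjacent to a pair) so that for every $b,b'\in B$ there exist two pair blocks with $bb'$ in one and $b'b$ in the other. The dummy pair $(a_m,a_{m+1})$ is crucial to this argument, since $N(a_{m+1})=\varnothing$ makes its pair block especially flexible. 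Once this is established, the word $w$ with $w_0$ removed is already $\lceil m/2\rceil$-uniform and represents $G\cup\{a_{m+1}\}$; deleting $a_{m+1}$ finishes the proof.
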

\begin{proof}

	Let $P_1 = \{b \in B| \ \deg(b) = m-1\}$ and $P_2 = B \setminus P_1$. Without loss of generality, assume that $P_1 = \{b_1, b_2, \ldots, b_r\}$ and $P_2 =\{b_{r+1}, b_{r+2}, \ldots, b_n\}$. Since $G$ is reduced (i.e., each vertex of $G$ has distinct neighborhood), for each $1 \le i \le r$, there exists $a_i \in A$ such that $b_i$ is non-adjacent to $a_i$. Clearly, the remaining vertices of $A$, say $a_{r+1}, \ldots, a_m$, are adjacent to each vertex of $P_1$. 
	
	Suppose $P_2 = \varnothing$. Then  the graph $G$ is the crown graph $H_{m,m}$ and from \cite[Theorem 7]{glen18}, $G$ is $\lceil\frac{m}{2}\rceil$-representable.
	
	Suppose $P_2 \ne \varnothing$. Since $m$ is odd, consider the set $A \cup \{a_{m+1}\}$ with a new isolated vertex $a_{m+1}$  and partition it into pairs, i.e., $ M_1 = \{  (a_1,a_2), \ldots, (a_{m-2},a_{m-1}), (a_m,a_{m+1})\} $. A vertex $c$ is said to be adjacent to a pair $(a_i,a_{j})$ if $c$ is adjacent to $a_i$ or $a_{j}$.  
	
	Recall that in the construction of the word $w_{(i,i+1)}$ given in Section \ref{const_single}, the arrangement of the vertices of $B$ within each block among $B_{*i}, B_{*(i+1)}', B_{i}, B_{(i+1)}'$, and $w_{(i,i+1)}'$  are in any order. For each pair $(a_i,a_{i+1})$, we now construct the word $w_{(i,i+1)}$ following the process given in Section \ref{const_single}, but by arranging the vertices of $B$ in each of the blocks in a specific order. 
	
	Note that $M_1$ has at least three pairs, as $m \ge 5$.  For each $b \in B$, case wise, first we implement the following steps:
	\begin{enumerate}
		\item If $b$ is non-adjacent to (at least) two pairs in $M_1$, say $(a_i, a_{i+1})$ and $(a_j, a_{j+1})$ then put $b$ in $B_{*i}$ of $w_{(i,i+1)}$ and put $b$ in $B_{*(i+1)}'$ of $w_{(j,j+1)}$.   
		\item If $b$ is non-adjacent to only one pair $(a_i, a_{i+1})$ in $M_1$, where $i \ne m$ then put $b$ in $B_{*i}$ of $w_{(i,i+1)}$ if $b$ is in $B_{t+1}'$ of some other word $w_{(t,t+1)}$. Otherwise, put $b$ in $B_{*(i+1)}'$ of $w_{(i,i+1)}$.
		\item If $b$ is non-adjacent to only $(a_m, a_{m+1})$ then put $b$ in $B_{*(m+1)}'$ of $w_{(m,m+1)}$ if $b$ is in $B_{t}$ of some other word $w_{(t,t+1)}$. Otherwise, put $b$ in $B_{*m}$ of $w_{(m,m+1)}$.
	\end{enumerate}

	Then we focus on arranging the vertices of $B$ within each block of $w_{(i,i+1)}$, for all $i$. Observe that if two vertices $b$ and $b'$ of $B$ occur in the same block of $w_{(i,i+1)}$ then $b$ and $b'$ must occur in different blocks of some word $w_{(t,t+1)}$, as $G \cup \{a_{m+1}\}$ is reduced. 
	\begin{itemize}
		\item For every pair of vertices $b$ and $b'$ within the same block of $w_{(1,2)}$,  if $b'$ occur before $b$ in different blocks of some word $w_{(t,t+1)}$ then put $b$ before $b'$ in the block of $w_{(1,2)}$. Else, put $b'$ before $b$ in the block of $w_{(1,2)}$.
		\item For $i > 1$, for every pair of vertices $b$ and $b'$ within the same block of $w_{(i,i+1)}$,  if $b'$ occur before $b$ in the word $w_{(1,2)}$ then put $b$ before $b'$ in the block of $w_{(i,i+1)}$. Else, put $b'$ before $b$ in the block of $w_{(i,i+1)}$. 	
	\end{itemize}
	By Theorem \ref{main_result}, the following $(1 + \lceil\frac{m}{2}\rceil)$-uniform word $w$ represents $G \cup \{a_{m+1}\}$.
	\[w  = w_{(1,2)} D_{(1:4)} w_{(3,4)} \cdots w_{(m,m+1)} D_{(m:2)}w_0\]

	Recall that $w_0$ is a permutation on the set $A \cup B \cup \{a_{m+1}\}$. We show that the following $\lceil \frac{m}{2} \rceil$-uniform word $w_1$ obtained by dropping the suffix $w_0$ from $w$ represents the graph $G \cup \{a_{m+1}\}$. 
	\[w_1 = w_{(1,2)} D_{(1:4)} w_{(3,4)} \cdots w_{(m,m+1)} D_{(m:2)}\]
	
	Note that if any two vertices are adjacent in $G \cup \{a_{m+1}\}$ then they   alternate in $w$ and hence they also alternate in $w_1$. Suppose two vertices of $G \cup \{a_{m+1}\}$ are not adjacent. If both are in $A \cup \{a_{m+1}\}$ or one is in $A \cup \{a_{m+1}\}$ and the other is in $B$, observe that they do not alternate in $w$ so that they will not alternate in $w_1$, as $w_0$ does not have any role in the proof of non-adjacency for $m > 4$ in Theorem \ref{main_result}.   
	
	We only need to show that if $a, b \in B$ then $a$ and $b$ do not alternate in $w_1$. Since every vertex of $B$ occur exactly once in each word $w_{(i,i+1)}$ of $w$, we conclude the result by proving the following claim. 
	
	\textit{Claim. There exist two words $w_{(r,r+1)}$ and $w_{(s,s+1)}$ such that $ab \ll w_{(r,r+1)}$ and $ba \ll w_{(s,s+1)}$}.\\ 
	
	Let $a$ or $b$ be non-adjacent to at least two pairs in $M_1$. Suppose $a$ and $b$ belong to the same block of some word $w_{(j,j+1)}$. If $j = 1$ then $ab \ll w_{(1,2)}$ if only if    $ba \ll w_{(t,t+1)}$, for some $t \ne 1$. If $j \ne 1$ then  $ab \ll w_{(j,j+1)}$ if and only if $ba \ll w_{(1,2)}$.    Suppose $a$ and $b$  belong to different blocks in every $w_{(2s-1,2s)}$, for $1 \le s \le \frac{m}{2}$. Let $a$ be non-adjacent to $(a_r,a_{r+1})$ and $(a_s,a_{s+1})$. Without loss of generality, assume that $a \ll B_{*r} \ll w_{(r,r+1)}$ then $a \ll B_{*(s+1)}' \ll w_{(s,s+1)}$.  We have $ab \ll w_{(r,r+1)}$ and $ba\ll w_{(s,s+1)}$.	
	
	On the other hand, both $a, b$ are non-adjacent to at most one pair in $M_1$. That is, $a$ is adjacent to all pairs or $a$ is non-adjacent to exactly one pair. Also, $b$ is adjacent to all pairs or $b$ is non-adjacent to exactly one pair.  Accordingly, by symmetry between $a$ and $b$, we have the following cases: (i) both $a, b$ are adjacent to all pairs, (ii) $a$ is not  adjacent one pair, say $(a_i, a_{i+1})$, and $b$ is adjacent to all pairs (iii) $a$ is not adjacent one pair, say $(a_i, a_{i+1})$, and 
	$b$ is not adjacent one pair, say $(a_j, a_{j+1})$.
	
	Since $N(a_{m+1}) = \varnothing$, any vertex adjacent to the pair $(a_m, a_{m+1})$ must be adjacent to only $a_m$. Depending on whether $i$ or $j$ equals $m$, aforementioned three cases will yield the following. In case (i), we have $a, b \in N(a_m)$. In case (ii), clearly $b \in N(a_m)$; and if $i \ne m$ then $a \in N(a_m)$, otherwise $a \notin N(a_m)$. In case (iii), we get all four possibilities based on $a \in N(a_m)$ or not, and $b \in N(a_m)$ or not. Accordingly, by symmetry between $a$ and $b$, we prove our claim in the following three cases:

	\begin{itemize}
		\item[-] Case 1: $a, b \in N(a_m)$. In this case, $a$ and $b$ belong to the same block $B_{m}$ of $w_{(m,m+1)}$. Thus, if $ab \ll w_{(1,2)} $ then $ba \ll w_{(m,m+1)}$. Otherwise, $ab \ll w_{(m,m+1)}$ if $ba \ll w_{(1,2)}$.
			
		\item[-] Case 2: $a, b \not\in N(a_m)$. If $a$ and $b$ belong to the same block, as seen before, we have, $ab \ll w_{(m,m+1)}$ if and only if $ba \ll w_{(1,2)}$.
			Otherwise, without loss of generality, suppose $a \ll B_{*m}$ and $b \ll B_{*(m+1)}'$ of $w_{(m,m+1)}$ so that $ab \ll w_{(m,m+1)}$. Since $b \ll B_{*(m+1)}'$, as per our construction of placing the vertices of $B$ (cf. step 3), the vertex $b$ must occur in $B_t$  of some other word $w_{(t,t+1)}$. If $a$ and $b$ belong to the same block of $w_{(t,t+1)}$ then $ab \ll B_t \ll w_{(t,t+1)}$ if and only if $ba \ll w_{(1,2)}$, when $t \ne 1$. However, if $t =1$, as $M_1$ has at least three pairs, there exists $j \notin \{1,m\}$ such that $ba \ll w_{(j,j+1)}$. Else, if $a$ is not in the block $B_t$ of $w_{(t,t+1)}$ then $ba \ll w_{(t,t+1)}$.
		Thus, in any case, there exist $r,s$ such that $ab \ll w_{(r,r+1)}$ and $ba \ll w_{(s,s+1)}$.
			
		\item[-] Case 3: $a \in N(a_m)$ and $b \not \in N(a_m)$. Suppose $a$ and $b$ belong to the same block of some word $w_{(j,j+1)}$. If $j = 1$ then clearly  $ab \ll w_{(1,2)}$ if and only if $ba \ll w_{(t,t+1)}$, for some $t \ne 1$. If $j \ne 1$ then $ab \ll w_{(j,j+1)}$ if and only if $ba \ll w_{(1,2)}$. Suppose $a$ and $b$ belong to different blocks in every $w_{(2s-1,2s)}$, for $ 1 \le s \le \frac{m}{2}$. We handle this in the following two subcases.
		
		\begin{itemize}
			\item Subcase 3.1: $b \ll B_{*(m+1)}'$. In this case, $ab \ll B_{m} B_{*(m+1)}' \ll w_{(m,m+1)}$ and $b$ belongs to $B_{t}$  of some word $w_{(t,t+1)}$. In the word $w_{(t,t+1)}$, if $a$ does not belong to $B_{*t}$ then $ba \ll w_{(t,t+1)}$. Otherwise, $ab \ll w_{(t,t+1)}$. But, since $M_1$ has at least three pairs, there exists another word $w_{(s,s+1)}$ with $s \not \in \{m, t\}$ and $a \ll B_{s+1}' \ll w_{(s,s+1)}$, so that $ba \ll w_{(s,s+1)}$.
			
			\item Subcase 3.2: $b \ll B_{*m}$. Then  $ba \ll w_{(m,m+1)}$. Note that, for odd $j$ and $1 \le j < m$,  $b$ belongs to either $w_{(j,j+1)}'$ or $B_{(j+1)}'$ of $w_{(j,j+1)}$. Since all the vertices of $B$ with degree $m$ or $m-1$ are adjacent to $a_m$, the vertex $b$ has degree at most $m-2$. This implies that $b$ belongs to $B_{t+1}'$ of some word $w_{(t,t+1)}$. In the word $w_{(t,t+1)}$, if $a$ does not belong to $B_{*(t+1)}'$  of $w_{(t,t+1)}$ then $ab \ll w_{(t,t+1)}$. Otherwise, $ba \ll B_{t+1}' B_{*(t+1)}' \ll w_{(t,t+1)}$. As $a \ll B_{*(t+1)}'$ and $M_1$ has at least three pairs,  $a$ belongs to either $B_{s}$ or $w_{s(s+1)}'$ of some word $w_{(s,s+1)}$, where $s \not \in \{t,m\}$ so that $ab\ll w_{(s,s+1)}$.
		\end{itemize}
		
		Thus, $a$ and $b$ do not alternate in $w_1$.
		
	\end{itemize} 
	Hence, $w_1|_{A\cup B}$ represents the graph $G$ so that $G$ is $\lceil \frac{m}{2}\rceil$-representable. \qed
\end{proof}

\begin{corollary}\label{conj_veri_2} 
	Let $G$ be a bipartite graph with partite sets of sizes $m$ and $n$, where $5 \le m\le n$. If $m$ is odd then $\mathcal{R}(G) \le \lceil \frac{m+n}{4} \rceil$.
\end{corollary}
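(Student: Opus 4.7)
The plan is to simply combine Theorem \ref{odd_result} with an elementary ceiling inequality. Since Theorem \ref{odd_result} already establishes that $G$ is $\lceil \frac{m}{2}\rceil$-representable whenever $m$ is odd and $5 \le m \le n$, it suffices to check that $\lceil \frac{m}{2} \rceil \le \lceil \frac{m+n}{4} \rceil$ under the hypothesis $m \le n$ with $m$ odd. The corollary then follows immediately from the definition of $\mathcal{R}(G)$.

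For the ceiling comparison, I would write $m = 2k-1$ with $k \ge 3$, so that $\lceil \frac{m}{2}\rceil = k$. Using $n \ge m = 2k-1$, one gets
\[
\frac{m+n}{4} \;\ge\; \frac{(2k-1)+(2k-1)}{4} \;=\; k - \tfrac{1}{2},
\]
and hence $\lceil \frac{m+n}{4}\rceil \ge \lceil k - \tfrac{1}{2}\rceil = k = \lceil \frac{m}{2}\rceil$. Combining with Theorem \ref{odd_result} gives $\mathcal{R}(G) \le \lceil \frac{m}{2}\rceil \le \lceil \frac{m+n}{4}\rceil$, as required.

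There is essentially no obstacle here; the corollary is a one-line bookkeeping consequence of the preceding theorem, and the only care needed is to ensure that the ceiling rounds in the right direction when $m$ is odd (which is exactly why the parity hypothesis is in play). The structurally identical argument for the even case is what breaks down and is precisely the open case of the conjecture that the paper flags elsewhere.
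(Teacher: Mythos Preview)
Your proof is correct and follows essentially the same approach as the paper: both deduce the corollary from Theorem~\ref{odd_result} via an elementary ceiling computation. Your version is in fact slightly cleaner, handling all $n \ge m$ uniformly rather than first invoking Theorem~\ref{conj_veri_1}(1) to dispose of $n \ge m+3$ and then checking $n \in \{m, m+1, m+2\}$ separately as the paper does.
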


\begin{proof}
	In view of Theorem \ref{conj_veri_1}(1), it is sufficient to verify the statement for $n \in \{m,m+1,m+2\}$. 
	Let $m =2k+1$, for some $k \ge 2$ and $n = 2k + t$, for $1 \le t \le 3$. Then  by Theorem \ref{odd_result},
	\[\lceil \frac{m+n}{4} \rceil = \lceil \frac{4k + t + 1}{4} \rceil = k+1 = \lceil \frac{m}{2} \rceil \ge \mathcal{R}(G).\] \qed
\end{proof}

Through Theorem \ref{conj_veri_1} and Corollary \ref{conj_veri_2}, apart from the bipartite graphs with partite sets of equal and even size, we proved that the conjecture by Glen et al. on the representation number of bipartite graphs is valid for all other bipartite graphs.

\section{Neighborhood inclusion graph approach} \label{neighborhood}

In this section, we focus on bipartite graphs with partite sets of equal and even size. For certain subclasses of these graphs,  we also verify the conjecture by Glen et al.    

A set $P$ with a partial order, say $\prec$ on $P$, is called a partially ordered set (simply, a poset). The elements $a, b$ in a poset are comparable if $a \prec b$ or $b \prec a$; otherwise, they are incomparable. A chain in a poset $P$ is a subset of $P$ consisting of pairwise comparable elements in the poset. A chain cover of a poset $P$ is a collection of disjoint chains whose union is $P$. The height of $P$, denoted by $h(P)$, is the size of a largest chain in $P$. Note that a comparability graph\footnote{A graph which admits a transitive orientation is called a comparability graph.} $G$ induces a poset, denoted by $P_G$, based on the chosen transitive orientation. 

Suppose $G= (V,E)$  is a graph, with $A \subseteq V$. A neighborhood inclusion graph of $G$ with respect to $A$ is the directed graph $\mathcal{N}_A(G)=(A,E')$, where $\overrightarrow{ab} \in E'$ if and only if $N(a) \subseteq N(b)$ for all $a,b \in A$. Note that the inclusion relation is transitive, and the  graph of $\mathcal{N}_A(G)$ induces a poset. The poset $P_{\mathcal{N}_A(G)}$ induced by $\mathcal{N}_A(G)$ is denoted simply by $P_A$ in the rest of the paper.

\begin{theorem}\label{length_three}
	Let $G$ be a bipartite graph with a partite set 
	$A=\{a_1,a_2,\ldots,a_m\}$, where $m = 2k$ ($k \ge 3$), and  $h(P_A)\ge 3$. Then  $\mathcal{R}(G) \le k$.
\end{theorem}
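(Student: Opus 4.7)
The plan is to adapt the scheme of Theorem~\ref{odd_result}, using the chain guaranteed by $h(P_A)\ge 3$ to play the structural role that the dummy vertex $a_{m+1}$ played in the odd-size case. Since $h(P_A)\ge 3$, fix three distinct vertices $a_{p_1},a_{p_2},a_{p_3}\in A$ with $N(a_{p_1})\subsetneq N(a_{p_2})\subsetneq N(a_{p_3})$. After relabeling, I would choose a pairing of $A$ into $k$ pairs that includes at least one ``comparable pair'' from the chain, say $(a_{p_2},a_{p_3})$ with $N(a_{p_2})\subsetneq N(a_{p_3})$. This inclusion forces the block $B_{p_2}=N(a_{p_2})\setminus N(a_{p_3})$ of the word $w_{(p_2,p_3)}$ from Section~\ref{const_single} to be empty, so every $b\in B$ falls in only three (rather than five) possible blocks of $w_{(p_2,p_3)}$. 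This trichotomy substitutes for the dichotomy imposed by the dummy pair $(a_m,a_{m+1})$ in Theorem~\ref{odd_result}.

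Next, I would build the $(k+1)$-uniform word $w$ of Section~\ref{const_single} under this pairing, arranging the vertices of $B$ inside each block according to the scheme developed in the proof of Theorem~\ref{odd_result}, with $(a_{p_2},a_{p_3})$ playing the role of the dummy pair there. Dropping the suffix $w_0$ then yields a candidate $k$-uniform word $w_1$. The verification that $w_1$ represents $G$ reuses the arguments of Theorem~\ref{main-1} almost verbatim: alternation of adjacent pairs is inherited from $w$ and is preserved under the deletion of $w_0$, while non-alternation between two vertices of $A$, and between a non-adjacent $A$--$B$ pair, follows from the proof of Theorem~\ref{main-1}, none of whose subcases invokes $w_0$ when $m\ge 6$. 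The only genuinely new step is the analog of the Claim from Theorem~\ref{odd_result}: for every two vertices $b,b'\in B$, there exist indices $r,s$ with $bb'\ll w_{(r,r+1)}$ and $b'b\ll w_{(s,s+1)}$, which forces $b$ and $b'$ not to alternate in $w_1$.

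The main obstacle will be establishing this last claim. Unlike in the odd case, we cannot assume that every $b\in B$ is automatically non-adjacent to the designated pair, and the delicate subcase is when both $b,b'$ lie in $N(a_{p_3})$ so that both occupy only $w_{(p_2,p_3)}'$ or $B_{p_3}'$ of $w_{(p_2,p_3)}$. Here the third chain element $a_{p_1}$ is crucial: because $N(a_{p_1})\subsetneq N(a_{p_2})$, the pair containing $a_{p_1}$ can be arranged to be a second comparable pair, providing a secondary reference on which the placement argument can be replayed, and the strict inclusions supply extra structural information distinguishing $b$ from $b'$ in at least one of these two comparable pairs. Combined with $k\ge 3$ (so that the partition has at least three pairs), this yields enough freedom to locate suitable $r$ and $s$ in every configuration, mirroring the three-pair argument at the end of the proof of Theorem~\ref{odd_result}. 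Carrying out this case analysis carefully, and handling the interaction between the two comparable pairs and the remaining pairs, is the technical heart of the proof.
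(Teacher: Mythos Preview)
Your strategy is genuinely different from the paper's, and the part you flag as ``the technical heart'' is where the plan is not yet a proof.

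The paper does \emph{not} drop $w_0$. Instead it uses the chain $a_1\prec a_2\prec a_3$ to pack four vertices of $A$ into a single ``super-pair'' $(a_1\prec a_2\prec a_3,\,a_4)$ and writes a bespoke word
\[
w_{(1,4)}=B_{*1}\,a_3\,B_3\,a_2\,B_2\,a_1\,B_1\,a_4\,C_1\,a_1\,C_2\,a_2\,C_3\,a_3\,B_4'\,a_4\,B_{*4}'
\]
in which each of $a_1,a_2,a_3,a_4$ occurs exactly twice; the nested inclusions $N(a_1)\subseteq N(a_2)\subseteq N(a_3)$ are exactly what makes this nesting consistent. This collapses the number of pair-words from $k$ to $k-1$, so the final word $w_2=w_{(1,4)}D_{(1:6)}w_{(5,6)}\cdots w_{(m-1,m)}D_{(m-1:4)}u_0$ (with $u_0$ the analog of $w_0$) is already $k$-uniform \emph{while retaining the suffix}. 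The $B$--$B$ non-alternation then follows verbatim from the $w_0$-argument of Theorem~\ref{main-1}; no block-ordering scheme is needed at all. What the paper's approach buys is that all the delicate work is concentrated in designing $w_{(1,4)}$, after which the verification is a mechanical replay of Theorem~\ref{main-1}.

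Your route instead keeps $k$ ordinary pairs and deletes $w_0$, so the entire $B$--$B$ burden falls on the Claim. Two concrete obstacles stand in the way of your outlined fix. First, your comparable pair $(a_{p_2},a_{p_3})$ has one more nonempty block type than the dummy pair of Theorem~\ref{odd_result}: there $w'_{(m,m+1)}$ and $B'_{m+1}$ are both empty, whereas here only $B_{p_2}$ vanishes, leaving $w'_{(p_2,p_3)}=N(a_{p_2})$ and $B'_{p_3}=N(a_{p_3})\setminus N(a_{p_2})$ both nonempty. The clean ``$b\in N(a_m)$ or not'' dichotomy driving Cases~1--3 there becomes a three-way split here. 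Second, and more seriously, the step ``the pair containing $a_{p_1}$ can be arranged to be a second comparable pair'' is not justified: $a_{p_1}$ must be paired with some leftover vertex $a_q$, and nothing guarantees $N(a_{p_1})$ and $N(a_q)$ are comparable. Note also that Theorem~\ref{odd_result} begins with a relabeling forcing every $b\in B$ of degree $m$ or $m-1$ to lie in $N(a_m)$, which is what makes Subcase~3.2 go through; you have no analog, since $a_{p_3}$ is dictated by the chain hypothesis and need not be adjacent to all high-degree vertices of $B$. Without a replacement for these two ingredients, the subcase where $b,b'\in N(a_{p_2})$ and $b,b'$ sit in different blocks of every other $w_{(i,i+1)}$ in the same relative order is not handled by your sketch.
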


\begin{proof}
	As $P_A$ is a poset of height at least three, there exists a chain of size three, say $\{a_1,a_2,a_3\}$ such that $a_1 \prec a_2 \prec a_3$. We now construct a $k$-uniform word-representant of $G$. We follow the same method given in Section \ref{const_single} but with the following modification. First partition the set $A$ into $k-1$ pairs given by:
	\[(a_1 \prec a_2\prec a_3, a_4), (a_5, a_6), \ldots, (a_{m-1}, a_m)\]
	Except for the first pair, as given in Section \ref{const_single}, we construct the word $w_{(i,j)}$ for each pair $(a_i, a_j)$, for $5 \le i \le m-1$ and $i$ odd. 
	Note that the first component of the first pair is the chain $a_1 \prec a_2 \prec a_3$. Considering the index of its first element, the word $w_{(i,j)}$ corresponding the first pair is denoted by $w_{(1, 4)}$ and constructed over the set $\{a_1,a_2,a_3,a_4\} \cup B$ as per the following:
	
	\[w_{(1,4)} = B_{*1}  a_3    B_3   a_2   B_2    a_1    B_1    a_4   C_{1}   a_1   C_2   a_2   C_3    a_3    B_4'   a_4   B_{*4}' \]
		where,
		\begin{enumerate}[label=\roman*.]
		\item $B_{*1}B'_{*4}$: A permutation on the vertices of $B$ which are not adjacent to any vertex of $\{a_1,a_2,a_3, a_4\}$.
		\item $B_{1}$: A permutation on the neighbors of $a_1$ but not of  $a_4$, i.e.,  $N(a_{1}) \setminus  N(a_4)$.
		\item $B_{2}$: A permutation on the neighbors of $a_2$ but not of  $a_1$ and $a_4$, i.e.,  $N(a_{2}) \setminus ( N(a_1) \cup N(a_4))$.
		\item $B_{3}$: A permutation on the neighbors of $a_3$ but not of  $a_2$ and $a_4$, i.e.,  $N(a_{3}) \setminus ( N(a_2) \cup N(a_4))$.
		\item $C_1$: A permutation on the common neighbors of  $a_1$ and $a_4$, i.e., $N(a_{1}) \cap N(a_{4})$. 
		\item $C_2$: A permutation on the common neighbors of  $a_4$ and $a_2$ but not of $a_{1}$, i.e., $(N(a_{2}) \cap N(a_{4})) \setminus N(a_{1})$. 
		\item $C_3$: A permutation on the common neighbors of  $a_4$ and $a_3$ but not of $a_{2}$, i.e., $(N(a_{3}) \cap N(a_{4})) \setminus N(a_{2})$. 
		\item $B_{4}'$: A permutation on the neighbors of $a_4$ but not of $a_3$, i.e.,  $N(a_{4}) \setminus N(a_{3})$. 
	\end{enumerate}
	Similar to $w_0$ constructed in Section \ref{const_single}, we construct the permutation $u_0$ on the set $A \cup B$ by
	$$u_0 = a_4  a_1  a_2   a_3   w_{(1,4)}^R|_{B}  a_6  a_5  \cdots   {a_m}  a_{m-1}.$$ 
	Finally, we construct a $k$-uniform word
	\[w_2 = w_{(1,4)}  D_{(1:6)}   w_{(5,6)}   D_{(5:8)}   w_{(7,8)}   \cdots   w_{(m-1,m)}   D_{(m-1:4)}  u_0\]	
	over the set $A \cup B$, where the word  $D_{(p,q)}$ is constructed as per the definition given in Section \ref{const_single} for consecutive pairs, in which whenever the index $1$ is referred the word $a_1a_2a_3$ will be placed together. For example, $D_{(5,8)} = a_4a_{10}a_{12}\cdots a_ma_1a_2a_3a_9a_{11} \cdots a_{m-1}$.

	We show that for any two vertices $a, b \in A \cup B$, $a$ and $b$ are adjacent in $G$ if and only if $a$ and $b$ alternate in $w_2$, so that $w_2$ is a word-representant of  the bipartite graph $G$. The proof is similar to the proof of Theorem \ref{main-1}, by considering the construction given here.
	
	Suppose $a$ and $b$ are adjacent in $G$. Let $a \in A$ and $b \in B$. We deal this in the following three cases:
	\begin{itemize}
		\item[-] Case 1: $a \in \{a_1,a_2, a_3,a_4\}$: We have
		\begin{align*}
			a^{(1)}b^{(1)}a^{(2)} &\ll w_{(1,4)}\\
		\text{for all } \ 3 \le i \le k-1, \quad	b^{(i-1)} &\ll w_{(2i-1,2i)} \text{ and } a^{(i)} \ll D_{(2i-1:2i+2)}  \\
			b^{(k -1)} &\ll w_{{(m-1,m)}}\\
			a^{( k)} b^{( k)} &\ll u_0
		\end{align*}
		so that $a^{(1)}  b^{(1)}  a^{(2)}  b^{(2)}   \cdots  a^{(k)}   b^{(k)}$ is a subword of $w_2$.
		\item[-] Case 2: $a \in \{a_{5}, a_{6}\}$: We have
		\begin{align*}
			b^{(1)} &\ll w_{(1,4)}\\
			 a^{(1)} b^{(2)} a^{(2)} &\ll w_{(5,6)}\\
			\text{for all} \ 4 \le i \le k-1, \quad b^{(i-1)}	&\ll w_{(2i-1,2i)} \text{ and } 	a^{(i-1)} \ll D_{(2i-1:2i+2)} \\
			b^{(k - 1 )}  &\ll w_{(m-1,m)} \text{ and } a^{(k -1)}\ll D_{(m-1:4)} \\
			b^{(k)} a^{(k)} &\ll u_0
		\end{align*}
		so that $b^{(1)}   a^{(1)} b^{(2)}   a^{(2)}   \cdots   b^{(k)}   a^{(k)} $ is a subword of $w_2$.
		
		\item[-] Case 3:  $a \in \{a_{2s-1}, a_{2s}\}$, for $4 \le s \le k$. We have 
		\begin{align*}		
			b^{(1)}&\ll w_{(1,4)} \text{ and }  a^{(1)}  \ll D_{(1:6)}\\	
			\text{for all} \ i \in \{3,\ldots,s-2,s+1, \ldots, k\}, \quad b^{(i-1)} &\ll w_{(2i-1,2i)} \text{ and }\\ a^{(i-1)}  &\ll \begin{cases}
				D_{(2i-1:2i+2)},&\text{ if } i \ne k;\\
				D_{(m-1:4)},&\text{ if } i = k
			\end{cases}  \\
			b^{(s-2)} &\ll w_{(2s-3,2s-2)}  \\
			a^{(s-2)} b^{(s-1)} a^{(s-1)} &\ll w_{(2s-1,2s)}\\
			b^{( k)} a^{( k)} &\ll u_0
		\end{align*}
		so that  $ b^{(1)}  a^{(1)} b^{(2)}  a^{(2)} \cdots   b^{( k)}   a^{(k)} $ is a subword of $w_2$.
	\end{itemize}
	Hence, $a$ and $b$ alternate in the word $w_2$.
	
	Conversely, suppose $a$ and $b$ are not adjacent in $G$. We show that, for some $j \ge 1$, one of the following words is a subword of $w_2$:  $a^{(j)}b^{(j)} b^{(j+1)}a^{(j+1)}$,  $b^{(j)}a^{(j)} a^{(j+1)} b^{(j+1)}$,  $a^{(j)}a^{(j+1)} b^{(j)} b^{(j+1)}$, $b^{(j)}b^{(j+1)} a^{(j)} a^{(j+1)}$. We deal this in the following three cases:
	\begin{itemize}
		\item[-] Case 1: $a,b \in A$. We have the following subcases:
		
		\begin{itemize}
			\item Subcase 1.1:  $a, b$ belong to the same pair. 
			\begin{enumerate}[label=\roman*)]
				\item Let $a, b$ belong to the first pair. If $a, b \in \{a_1,a_2,a_3\}$, without loss of generality, assume $a = a_s$ and $b = a_t$ with $s > t$. Then  clearly $a^{(1)}b^{(1)}b^{(2)}a^{(2)} \ll w_{(1,4)}$. Otherwise, let $a \in \{a_1,a_2,a_3\}$ and $b = a_4$. Then  \[a^{(1)} b^{(1)} a^{(2)} b^{(2)} \ll w_{(1,4)}, \text{ and } 
				b^{(3)} a^{(3)} \ll 
				 \begin{cases}
				 	u_0, & \text{for } m = 6;\\
				 	D_{(5:8)}, & \text{for } m > 6,
				 \end{cases}	
				 \] so that $a^{(2)} b^{(2)} b^{(3)} a^{(3)} \ll w_2$. Thus, in any case,  $a^{(j)}b^{(j)} b^{(j+1)}a^{(j+1)} \ll w_2$, for some $j$.
				 \item If $a = a_5 $ and $b = a_6$ then $a^{(1)} b^{(1)} a^{(2)} b^{(2)} \ll w_{(5,6)}$ and
				 	\[
				 b^{(3)} a^{(3)} \ll 
				 \begin{cases}
				 	u_0, & \text{for } m = 6;\\
				 	D_{(7:4)}, & \text{for } m = 8; \\
				 	D_{(7:10)}, & \text{for } m > 8,
				 \end{cases}	
				 \]
				 so that $a^{(2)} b^{(2)} b^{(3)} a^{(3)} \ll w_2$.
				 \item For $4 \le s \le k$, if $a = a_{2s-1}$ and $b = a_{2s}$ then $a^{(s-2)} b^{(s-2)} a^{(s-1)} b^{(s-1)} \ll w_{(2s-1,2s)}$ and 
				 	\[
				 b^{(s-3)} a^{(s-3)} \ll 
				 \begin{cases}
				 	D_{(1:6)}, & \text{for } s = 4; \\
				 	D_{(2s-5:2s-2)}, & \text{for } s>4,
				 \end{cases}	
				 \]
				  so that $b^{(s-3)} a^{(s-3)} a^{(s-2)} b^{(s-2)} \ll w_2$.
			\end{enumerate}	
			\item Subcase 1.2: $a$ and $b$ belong to different pairs. 
			\begin{enumerate}[label=\roman*)]
				\item Let one of $a, b$ belong to the first pair $(a_1 \prec a_2 \prec a_3, a_4)$. Without loss of generality, let $a \in \{a_{1}, a_{2}, a_{3}, a_{4}\}$. As $b \notin \{a_{1}, a_{2}, a_{3}, a_{4}\}$, $b$ does not appear in $w_{(1,4)}$. Accordingly, we have $$a^{(1)}a^{(2)} \ll w_{(1,4)} \text{ so that }\; a^{(1)}a^{(2)}b^{(1)}b^{(2)} \ll w_2.$$
				\item If $a \in \{a_{2s-1}, a_{2s}\}$, for $3 \le s \le k $ then consider the following cases depending on whether $b$ is in the subsequent/preceding  pair or not:
				\begin{enumerate}
					\item If $b \in \{a_{2s+1}, a_{2s+2}\}$ then 
					\begin{align*}
						b^{(s-2)} &\ll \begin{cases}
							D_{(1:6)},&\text{ for } s = 3;\\
							D_{(2s-3:2s)},&\text{ for } s > 3
						\end{cases} \\
						a^{(s-2)} a^{(s-1)} &\ll w_{(2s-1, 2s)}\\
						b^{(s-1)} b^{(s)} &\ll w_{(2s+1, 2s+2)} 
					\end{align*}
					so that $b^{(s-2)}a^{(s-2)} a^{(s-1)}b^{(s-1)} \ll w_2$.
					
					\item If $b \notin \{a_{2s-3}, a_{2s-2},a_{2s+1}, a_{2s+2}\}$ then  	\[
					b^{(s-2)} a^{(s-2)} a^{(s-1)} b^{(s-1)}\ll
					\begin{cases}
						D_{(1:6)} w_{(5,6)}D_{(5:4)}, & \text{for } s = 3 \text{ and } m=6;\\
						D_{(1:6)} w_{(5,6)}D_{(5:8)}, & \text{for } s = 3 \text{ and } m>6;\\
						D_{(2s-3:2s)}w_{(2s-1,2s)}D_{(2s-1:4)}, & \text{for } s = k;\\
						D_{(2s-3:2s)}w_{(2s-1,2s)}D_{(2s-1:2s+2)}, & \text{for } 3 < s < k.
					
					\end{cases}	
					\]
					Thus, $	b^{(s-2)} a^{(s-2)} a^{(s-1)} b^{(s-1)} \ll w_2$.
				\end{enumerate}
			\end{enumerate}
	\end{itemize}
	
	\item[-] Case 2: $a \in A$ and $b \in B$. We deal this case in the following three subcases.
		\begin{itemize}
			\item Subcase 2.1: $a \in \{a_1,a_2,a_3,a_4\}$. 
			\begin{enumerate}[label=\roman*)]
				\item 	If $b \in N(a_4)$  then  $a^{(1)} a^{(2)} b^{(1)} \ll w_{(1,4)} $. This implies that  $a^{(1)} a^{(2)} b^{(1)} b^{(2)} \ll w_2 $.
				\item 	If $b \in N(a_3) \setminus N(a_4)$ then $ b^{(1)} a^{(1)} a^{(2)} \ll w_{(1,4)}$ and $b^{(2)} \ll w_{(5,6)}$. Thus, $b^{(1)} a^{(1)} a^{(2)} b^{(2)} \ll w_2$.
				\item If $b \not \in N(a_3) \cup N(a_4)$ then either $b^{(1)} \ll 	B_{*1}$ or $b^{(1)}\ll	B_{*4}'$, and   $b^{(2)} \ll w_{(5,6)}.$ Thus, either $b^{(1)} a^{(1)} a^{(2)} b^{(2)} \ll w_2$	 or $ a^{(1)} a^{(2)} b^{(1)}b^{(2)} \ll w_2$.
			\end{enumerate}
			\item Subcase 2.2: $a \in \{a_{2s-1},a_{2s}\}$, for $3 \le s \le k$.
			\begin{enumerate}[label=\roman*)]
				\item If $b \in N(a_{2s})$ then $b^{(s-1)} \ll B_{2s}' $. Thus, $$b^{(s-2)} \ll \begin{cases}
					w_{(1,4)}, &\text{ for } s=3;\\
					w_{(2s-3,2s-2)}, &\text{ for } s>3,
				\end{cases}  \text{ and } a^{(s-2)} a^{(s-1)} b^{(s-1)}  \ll w_{(2s-1,2s)},$$ so that $b^{(s-2)} a^{(s-2)} a^{(s-1)} b^{(s-1)} \ll w_2$.
				\item If $b \in N(a_{2s-1})$ then $b^{(s-1)} \ll B_{2s-1}$. Thus, $$b^{(s-2)} \ll \begin{cases}
					w_{(1,4)}, &\text{ for } s=3;\\
					w_{(2s-3,2s-2)}, &\text{ for } s>3,
				\end{cases}  \text{ and } b^{(s-1)}  a^{(s-2)} a^{(s-1)}  \ll w_{(2s-1,2s)},$$ so that $b^{(s-2)} b^{(s-1)} a^{(s-2)} a^{(s-1)}  \ll w_2$.
				\item If $b \not \in N(a_{2s-1}) \cup N(a_{2s})$ then either $b^{(s-1)} \ll B_{*(2s-1)}$ or $b^{(s-1)} \ll B_{*(2s)}'$. Thus, $$b^{(s-2)} \ll \begin{cases}
					w_{(1,4)}, &\text{ for } s=3;\\
					w_{(2s-3,2s-2)}, &\text{ for } s >3,
				\end{cases}$$ so that either $b^{(s-2)}b^{(s-1)}a^{(s-2)} a^{(s-1)}  \ll  w_2$ or $b^{(s-2)}a^{(s-2)} a^{(s-1)} b^{(s-1)} \ll w_2$.			
			\end{enumerate}					
		\end{itemize}
		
		\item[-] Case 3: $a,b \in B$.	The word $w_2$ is of the form  $w_{(1,4)}vu_0$. Clearly, $w_{(1,4)}|_{B} \ll w_{(1,4)}$, and by construction of $u_0$ we have, ${w_{(1,4)}^R}|_B \ll u_0$. Since $a$ and $b$ occur exactly once in $w_{(1,4)}$, without loss of generality, assume that $ab\ll w_{(1,4)}$ so that $w_2|_{{\{a,b\}}} = ab \cdots ba$. Since $w_2$ is a uniform word, both $a$ and $b$ must occur the same number of times. Thus, two $b$'s occur consecutively in $w_2|_{{\{a,b\}}}$. Thus, $a$ and $b$ do not alternate in $w_2$.
	\end{itemize}
	Hence, $w_2$ represents $G$ so that $\mathcal{R}(G) \le k$. \qed
\end{proof}

In view of Theorem \ref{length_three}, we note that the conjecture by Glen et al. holds good for all bipartite graphs considered in this section having $h(P_A) \ge 3$. For another special class from the remaining graphs, we have the following theorem.      

\begin{theorem}\label{length_two}
	Let $G$ be a bipartite graph with a partite set $A = \{a_1,a_2,\ldots,a_m\}$, where $m =2k$ ($k\ge3$). If $h(P_A) = 2$ and contains at least two disjoint chains of size two then $\mathcal{R}(G) \le k$.
\end{theorem}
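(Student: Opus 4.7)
The plan is to adapt the proof strategy of Theorem~\ref{odd_result} rather than that of Theorem~\ref{length_three}. We construct the $(1+k)$-uniform word $w$ from Section~\ref{const_single}, taking the two given chains $a_1 \prec a_2$ and $a_3 \prec a_4$ as the first two of the $k$ pairs in the partition of $A$, and then drop the final block $w_0$ to obtain the $k$-uniform word
\[
 w_4 = w_{(1,2)}\, D_{(1:4)}\, w_{(3,4)}\, D_{(3:6)}\, w_{(5,6)} \cdots w_{(m-1,m)}\, D_{(m-1:2)}.
\]
The two chain pairs take over the role that the dummy pair $(a_m, a_{m+1})$ played in Theorem~\ref{odd_result}: since $N(a_1) \subseteq N(a_2)$ and $N(a_3) \subseteq N(a_4)$, the blocks $B_1 = N(a_1) \setminus N(a_2)$ and $B_3 = N(a_3) \setminus N(a_4)$ are empty in $w_{(1,2)}$ and $w_{(3,4)}$, respectively.

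The first step is to define an explicit placement rule for every $b \in B$ inside the blocks of the various $w_{(i,i+1)}$, mimicking the three-step procedure of Theorem~\ref{odd_result} but with the two chain pairs replacing the single dummy pair. The goal of this placement is to secure the following claim: for any two distinct vertices $a, b \in B$ there exist indices $r$ and $s$ such that $ab \ll w_{(r,r+1)}$ and $ba \ll w_{(s,s+1)}$.

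Next we verify that $w_4$ represents $G$ by a case analysis parallel to that of Theorem~\ref{main-1}. Adjacency for $a \in A$, $b \in B$ will follow from the alternation patterns of Remarks~\ref{place_b} and~\ref{place_a} by dropping the last $(1+k)$-th occurrences that originally came from $w_0$; non-adjacency within $A$ and for the mixed case $a \in A$, $b \in B$ is inherited verbatim from Theorem~\ref{main-1}, since $w_0$ was not invoked in those arguments; and non-adjacency of $a, b \in B$, which did rely on $w_0$ in Theorem~\ref{main-1}, now follows from the claim above, because once $ab \ll w_{(r,r+1)}$ and $ba \ll w_{(s,s+1)}$ hold, the uniformity of $w_4$ forces two consecutive identical letters to appear in $w_4|_{\{a, b\}}$.

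The main obstacle will be establishing the claim. The argument has to split into cases based on how many of the $k$ pairs each of $a, b$ is non-adjacent to, and on which blocks each occupies in the two chain pairs; having two disjoint chains of size two (rather than only one) is precisely what supplies enough flexibility, since a single chain pair would leave $b$-configurations in which the required flip of ordering cannot be arranged. The emptiness of $B_1$ and $B_3$ forces every vertex of $N(a_1)$ (respectively, $N(a_3)$) into the block $w'_{(1,2)}$ (respectively, $w'_{(3,4)}$), which furnishes the tight structural control needed to mirror Theorem~\ref{odd_result}'s case analysis. Once the claim is in hand, the remainder of the proof is routine.
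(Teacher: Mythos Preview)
There is a genuine gap: your analogy between the chain pairs and the dummy pair $(a_m,a_{m+1})$ of Theorem~\ref{odd_result} is not tight enough to carry the claim. In Theorem~\ref{odd_result} the dummy vertex has $N(a_{m+1})=\varnothing$, so \emph{two} blocks of $w_{(m,m+1)}$ vanish---both $w'_{(m,m+1)}$ and $B'_{m+1}$---and every $b\in B$ adjacent to that pair lands in the single block $B_m$, where its position is entirely at the disposal of the placement rule. In your chain pair $(a_1,a_2)$ only $B_1$ vanishes; vertices of $N(a_1)$ are forced into $w'_{(1,2)}$ while vertices of $N(a_2)\setminus N(a_1)$ are forced into $B'_2$, and the relative order ``$w'_{(1,2)}$ before $B'_2$'' is rigid in the construction of Section~\ref{const_single}, beyond the reach of any placement rule. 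The same rigidity holds in $(a_3,a_4)$.

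This breaks the claim outright. Take $m=n=6$ with $N(b_2)=A$ and $N(b_1)=\{a_2,a_4,a_6\}$, and complete $B$ by $N(b_3)=\{a_1,a_2\}$, $N(b_4)=\{a_3,a_4\}$, $N(b_5)=\{a_5,a_6\}$, $N(b_6)=\{a_5\}$; one checks that $G$ is connected, reduced, and that $P_A$ has height~$2$ with exactly the two disjoint chains $a_1\prec a_2$ and $a_3\prec a_4$. Then in each of $w_{(1,2)},w_{(3,4)},w_{(5,6)}$ the vertex $b_2$ lies in the block $w'$ and $b_1$ lies in $B'_2,B'_4,B'_6$ respectively, so $b_2b_1\ll w_{(2s-1,2s)}$ for every $s$ and $w_4|_{\{b_1,b_2\}}=(b_2b_1)^3$ alternates even though $b_1,b_2\in B$ are non-adjacent. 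No placement rule helps, since $b_1,b_2$ are adjacent to all three pairs (hence never enter a $B_{*}$ block) and sit in distinct, rigidly ordered blocks of every $w_{(2s-1,2s)}$. The paper avoids this obstacle by a different mechanism: it merges each two-element chain with an extra vertex of $A$ into a \emph{triple}, obtaining only $k-1$ blocks, and keeps a final permutation $v_0$ built from a reversal; the $B$--$B$ non-adjacency is then handled exactly as in Theorem~\ref{main-1} via the reversal trick, not via a placement rule.
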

\begin{proof}
	Let $a_1 \prec a_2$ and $a_4 \prec a_5$ be two disjoint chains in $P_A$. We extend the method given in Theorem \ref{length_three}, and construct a $k$-uniform word-representant of $G$. First partition the set $A$ into $k-1$ pairs given by:
	\[(a_1 \prec a_2, a_3), (a_4 \prec a_5, a_6), (a_7, a_8), \ldots, (a_{m-1}, a_m)\]

Except for the first two pairs, as given in Section \ref{const_single}, we construct the word $w_{(i,j)}$ for each pair $(a_i, a_j)$, for $7 \le i \le m-1$ and $i$ odd. For the first pair $(a_1 \prec a_2, a_3)$, construct the word denoted by $w_{(1,3)}$ on the set $\{a_1,a_2,a_3\} \cup B$ as per the following:
		\[w_{(1,3)} = B_{*1}  a_2  B_2  a_1  B_1  a_3  C_{1}  a_1  C_2  a_2   B_3'  a_3  B_{*3}' \]
		where,
		\begin{enumerate}[label=\roman*.]
			\item $B_{*1}B'_{*3}$: A permutation on the vertices of $B$ which are not adjacent to any vertex of $\{a_1,a_2,a_3\}$.
			\item $B_{1}$: A permutation on the neighbors of $a_1$ but not of  $a_3$, i.e.,  $N(a_{1}) \setminus  N(a_3)$.
			\item $B_{2}$: A permutation on the neighbors of $a_2$ but not of  $a_1$ and $a_3$, i.e.,  $N(a_{2}) \setminus ( N(a_1) \cup N(a_3))$.		
			\item $C_1$: A permutation on the common neighbors of  $a_1$ and $a_3$, i.e., $N(a_{1}) \cap N(a_{3})$. 
			\item $C_2$: A permutation on the common neighbors of  $a_3$ and $a_2$ but not of $a_{1}$, i.e., $(N(a_{2}) \cap N(a_{3})) \setminus N(a_{1})$. 
			\item $B_{3}'$: A permutation on the neighbors of $a_3$ but not of $a_2$, i.e.,  $N(a_{3}) \setminus N(a_{2})$. 
		\end{enumerate}
	Similarly, for the pair $(a_4 \prec a_5, a_6)$, construct the word denoted by $w_{(4,6)}$ on the set $\{a_4,a_5,a_6\} \cup B$ as per the following:
		\[w_{(4,6)} = B_{*4}   a_5  B_5  a_4  B_4  a_6  C_{4}  a_4  C_5  a_5   B_6'  a_6  B_{*6}' \]
		where,
		\begin{enumerate}[label=\roman*.]
			\item $B_{*4}B'_{*6}$: A permutation on the vertices of $B$ which are not adjacent to any vertex of $\{a_4,a_5,a_6\}$.
			\item $B_{4}$: A permutation on the neighbors of $a_4$ but not of  $a_6$, i.e.,  $N(a_{4}) \setminus  N(a_6)$.
			\item $B_{5}$: A permutation on the neighbors of $a_5$ but not of  $a_4$ and $a_6$, i.e.,  $N(a_{5}) \setminus (N(a_4) \cup N(a_6))$.		
			\item $C_1$: A permutation on the common neighbors of  $a_4$ and $a_6$, i.e., $N(a_{4}) \cap N(a_{6})$. 
			\item $C_2$: A permutation on the common neighbors of  $a_6$ and $a_5$ but not of $a_{4}$, i.e., $(N(a_{5}) \cap N(a_{6})) \setminus N(a_{4})$. 
			\item $B_{6}'$: A permutation on the neighbors of $a_6$ but not of $a_5$, i.e.,  $N(a_{6}) \setminus N(a_{5})$. 
		\end{enumerate}

Now construct the permutation $v_0$ on the set $A \cup B$ by considering the reversal of the word $w_{(1,3)}$ restricted to the vertices of $B$ as per the following:
		$$v_0 = a_3a_1a_2 w_{(1,3)}^R|_{B}a_6a_4 a_5a_8a_7\cdots {a_m}a_{m-1}$$
Finally, construct the $k$-uniform word $w_3$ over $A \cup B$ as per the following:
		\[w_3 = w_{(1,3)}D_{(1:6)}  w_{(4,6)} D_{(4:8)} w_{(7,8)} \cdots w_{(m-1,m)} D_{(m-1:3)}v_0\]

Along the similar lines of  Theorem \ref{length_three}, we show that the word $w_3$ represents the graph $G$. 
Suppose $a$ and $b$ are adjacent in $G$. Let $a \in A$ and $b \in B$. We deal this in the following four cases:
	\begin{itemize}
		\item[-] Case 1: $a \in \{a_1,a_2,a_3\}$: We have
		\begin{align*}
			a^{(1)}b^{(1)}a^{(2)} &\ll w_{(1,3)} \\
			b^{(2)} &\ll w_{(4,6)} \text{ and } a^{(3)}\ll D_{(4:8)}\\
		\text{for all } \ 4 \le i \le k-1, \quad	b^{(i-1)} &\ll w_{(2i-1,2i)} \text{ and } a^{(i)} \ll D_{(2i-1:2i+2)} \\
			b^{(k -1)} &\ll w_{{(m-1,m)}}\\
			a^{( k)} b^{( k)} &\ll v_0
		\end{align*}
		so that $ a^{(1)}   b^{(1)}   a^{(2)}   b^{(2)}   \cdots   a^{(k)}  b^{(k)} \ll w_3$.
		
		\item[-] Case 2: $a \in \{a_4,a_5,a_6\}$: We have
		\begin{align*}
		b^{(1)} &\ll w_{(1,3)}\\
		a^{(1)} b^{(2)} a^{(2)} &\ll w_{(4,6)}  \\		
	 \text{for all} \ 4 \le i \le k-1, \quad	b^{(i-1)}&\ll w_{(2i-1,2i)} \text{ and } a^{(i-1)}  \ll D_{(2i-1:2i+2)}  \\
		b^{(k -1)}&\ll w_{{(m-1,m)}} \text{ and }   a^{(k-1)} \ll D_{(m-1:3)}\\
		b^{( k)} a^{( k)} &\ll v_0
		\end{align*}
		so that $   b^{(1)} a^{(1)}   b^{(2)}   a^{(2)}   \cdots   b^{(k)}   a^{(k)} \ll w_3$.
		
		\item[-] Case 3: $a \in \{a_{7}, a_{8}\}$: We have
		\begin{align*}
			b^{(1)}  &\ll w_{(1,3)} \text{ and } a^{(1)}\ll D_{(1:6)}\\
			b^{(2)} &\ll w_{(4,6)}\\
			a^{(2)} b^{(3)} a^{(3)} &\ll w_{(7,8)} \\			
		\text{ for all } 5 \le i \le k-1, \quad	b^{(i-1)}	&\ll w_{(2i-1,2i)}  \text{ and } 	a^{(i-1)}\ll D_{(2i-1:2i+2)} \\
			b^{(k - 1 )} &\ll w_{(m-1,m)} \text{ and } a^{(k -1)} \ll D_{(m-1:3)}\\
			b^{(k)} a^{(k)} &\ll v_0
		\end{align*}
		so that $  b^{(1)}   a^{(1)}   b^{(2)}   a^{(2)}   \cdots   b^{(k)}   a^{(k)}  \ll w_3$.
		
		\item[-] Case 4: $a \in \{a_{2s-1}, a_{2s}\}$, for $5 \le s \le k$. We have 
		\begin{align*}		
			b^{(1)}  &\ll w_{(1,3)} \text{ and } a^{(1)} \ll D_{(1:6)}\\	
			b^{(2)} &\ll w_{(4,6)} \text{ and }  a^{(2)}\ll D_{(4:8)}\\	
			\text{for all } \ i \in \{4,\ldots,s-2,s+1, \ldots, k\}, \quad b^{(i-1)} &\ll w_{(2i-1,2i)} \text{ and } \\ a^{(i-1)}&\ll \begin{cases}
				D_{(2i-1:2i+2)},&\text{ if } i \ne k;\\
				D_{(m-1:3)},&\text{ if } i = k
			\end{cases}  \\
			b^{(s-2)} &\ll w_{(2s-3,2s-2)} \\
			a^{(s-2)} b^{(s-1)} a^{(s-1)} &\ll w_{(2s-1,2s)}\\
			b^{( k)} a^{( k)} &\ll v_0
		\end{align*}
		so that $ b^{(1)}   a^{(1)} b^{(2)}   a^{(2)}  \cdots  b^{( k)}  a^{(k)} \ll w_3 $.
	\end{itemize}
	Hence, $a$ and $b$ alternate in the word $w_3$. 	
	Conversely, suppose $a$ and $b$ are not adjacent in $G$. 
	\begin{itemize}
		\item[-] Case 1: $a,b \in A$. We have the following subcases:
		\begin{itemize}
			\item Subcase 1.1: $a$ and $b$ belong to the same pair.
			\begin{enumerate}[label=\roman*)]
				\item Let $a, b$ belong to the first pair. If $a, b \in \{a_1,a_2\}$, without loss of generality, assume $a = a_2$ and $b = a_1$. Then  clearly $a^{(1)}b^{(1)}b^{(2)}a^{(2)} \ll w_{(1,3)}$. Otherwise, let $a \in \{a_1,a_2\}$ and $b = a_3$. Then $a^{(2)}b^{(2)}  \ll w_{(1,3)}$ and 
				\[
				b^{(3)} a^{(3)} \ll 
				\begin{cases}
					v_0, & \text{for } m = 6; \\
					D_{(4:8)}, & \text{for } m > 6, 
				\end{cases}	
				\]
				so that $a^{(2)} b^{(2)} b^{(3)} a^{(3)} \ll w_3$. Thus, in any case,  $a^{(j)}b^{(j)} b^{(j+1)}a^{(j+1)} \ll w_3$, for some $j$.
				
				\item Let $a, b$ belong to the second pair. If $a, b \in \{a_4,a_5\}$, without loss of generality, assume $a = a_5$ and $b = a_4$. Then  clearly $a^{(1)}b^{(1)}b^{(2)}a^{(2)} \ll w_{(4,6)}$. Otherwise,  let $a \in \{a_4, a_5\}$ and $b = a_6$. Then $ a^{(2)} b^{(2)}\ll w_{(4,6)}$ and 
				\[
				b^{(3)} a^{(3)} \ll 
				\begin{cases}
					v_0, & \text{for } m = 6; \\
					D_{(7:3)}, & \text{for } m = 8; \\
					D_{(7:10)} & \text{for } m > 8,
				\end{cases}	
				\]
				so that  $a^{(2)} b^{(2)} b^{(3)} a^{(3)} \ll w_3$. Thus, in any case,  $a^{(j)}b^{(j)} b^{(j+1)}a^{(j+1)} \ll w_3$, for some $j$.
				
				\item Let $a, b \in \{a_7, a_8\}$. Without loss of generality, assume $a = a_7$ and $b = a_8$. Then $b^{(1)} a^{(1)} \ll D_{(1:6)}$ and $a^{(2)} b^{(2)}\ll w_{(7,8)}$ so that   $b^{(1)} a^{(1)}a^{(2)} b^{(2)} \ll w_3$.
				
				\item Let $a, b \in \{a_9, a_{10}\}$. Assume $a = a_9$ and $b = a_{10}$. Then  $b^{(2)} a^{(2)} \ll D_{(4:8)}$ and $a^{(3)} b^{(3)} \ll w_{(9,10)}$ so that $b^{(2)} a^{(2)}a^{(3)} b^{(3)} \ll w_3$.
					
				\item For $6 \le s \le k$, if $a = a_{2s-1}$ and $b = a_{2s}$ then  $
				b^{(s-3)} a^{(s-3)} \ll 
				D_{(2s-5:2s-2)}$ and $a^{(s-2)} b^{(s-2)} \ll w_{(2s-1,2s)}$.
				Thus, $b^{(s-3)} a^{(s-3)} a^{(s-2)} b^{(s-2)} \ll  w_3$.
			\end{enumerate}
			\item Subcase 1.2: $a$ and $b$ belong to different pairs.
			\begin{enumerate}[label=\roman*)]
				\item Let one of $a,b$ be in the first pair  $(a_1 \prec a_2,a_3)$. Without loss of generality, assume $a \in \{a_{1}, a_{2}, a_3\}$. Then $a^{(1)}a^{(2)}\ll w_{(1,3)}$ and $b^{(1)} \not \ll w_{(1,3)}$. Thus, $a^{(1)}a^{(2)}b^{(1)}b^{(2)} \ll w_3.$
				
				\item Let one of $a,b$ be in the second pair $(a_4 \prec a_5,a_6)$ and assume $a \in \{a_{4}, a_{5}, a_6\}$. Then $b^{(1)} \ll D_{(1:6)}$ and $ a^{(1)} a^{(2)} \ll w_{(4,6)}$. Thus, $b^{(1)} a^{(1)} a^{(2)} b^{(2)} \ll w_3$.
				\item For $ 4 \le s\le k$, if $a \in \{a_{2s-1}, a_{2s}\} $ then consider the following cases depending on whether $b$ is in the subsequent/preceding pair or not:
				\begin{enumerate}
					\item If $b \in \{a_{2s+1},a_{2s+2}\}$ then  
					\begin{align*}
						b^{(s-2)} &\ll \begin{cases}
							D_{(4:8)}, & \text{ for } s = 4;\\
							D_{(2s-3:2s)}, & \text{ for } s>4
						\end{cases} \\
						a^{(s-2)} a^{(s-1)} &\ll w_{(2s-1,2s)}\\
						b^{(s-1)} b^{(s)} &\ll w_{(2s+1,2s+2)}
					\end{align*}
					so that $b^{(s-2)}a^{(s-2)}a^{(s-1)} b^{(s-1)} \ll w_3$.
					
					\item  If $b \notin \{a_{2s-3}, a_{2s-2},a_{2s+1}, a_{2s+2}\}$ then 
					\begin{align*}
						b^{(s-2)} &\ll \begin{cases}
							D_{(4:8)},&\text{ for } s = 4;\\
							D_{(2s-3:2s)},&\text{ for } s > 4
						\end{cases}\\
						 a^{(s-2)} a^{(s-1)} &\ll w_{(2s-1,2s)} \\
						 b^{(s-1)} &\ll \begin{cases}
						 	D_{(2s-1:2s+2)}, &\text{ for } s \ne k;\\
						 	D_{(m-1:3)}, &\text{ for } s = k
						 \end{cases}
					\end{align*}					
					so that $b^{(s-2)} a^{(s-2)} a^{(s-1)} b^{(s-1)} \ll w_3$.
				\end{enumerate}			
			\end{enumerate}		
		\end{itemize} 
		
		\item[-] Case 2: $a \in A$ and $b \in B$. We deal this case in the following three subcases.
		\begin{itemize}
			\item Subcase 2.1: $a \in \{a_1,a_2,a_3\}$. \begin{enumerate}[label=\roman*)]
				\item If $b \in N(a_3)$ then  $a^{(1)} a^{(2)} b^{(1)} \ll w_{(1,3)} $ so that  $a^{(1)} a^{(2)} b^{(1)} b^{(2)} \ll w_3 $. 
				\item If $b \in N(a_2) \setminus N(a_3)$ then $ b^{(1)} a^{(1)} a^{(2)}  \ll w_{(1,3)}$ and $b^{(2)} \ll  w_{(4,6)} $. Thus, $ b^{(1)} a^{(1)} a^{(2)} b^{(2)} \ll w_3$.
				\item  If $b \not \in N(a_2) \cup N(a_3)$ then either $b^{(1)} \ll B_{*1}$ or $b^{(1)} \ll B_{*3}'$, and $a^{(1)} a^{(2)} \ll w_{(1,3)}$. Thus, either $b^{(1)} a^{(1)} a^{(2)}  b^{(2)} \ll w_3$	 or $ a^{(1)} a^{(2)} b^{(1)}b^{(2)} \ll w_3$.
			\end{enumerate}
			
			\item Subcase 2.2: $a \in \{a_4,a_5,a_6\}$. \begin{enumerate}[label=\roman*)]
				\item If $b \in N(a_6)$ then $b^{(1)} \ll w_{(1,3)}$  and $a^{(1)} a^{(2)} b^{(2)} \ll w_{(4,6)}$. Thus,  $b^{(1)} a^{(1)} a^{(2)} b^{(2)} \ll w_3 $.
				\item  If $b \in N(a_5) \setminus N(a_6)$ then $b^{(1)} \ll w_{(1,3)}$  and $b^{(2)} a^{(1)} a^{(2)}  \ll  w_{(4,6)}$. Thus,  $b^{(1)} b^{(2)} a^{(1)} a^{(2)}  \ll w_3 $.
				\item 	If $b \not \in N(a_5) \cup N(a_6)$ then $b^{(1)} \ll w_{(1,3)}$,  and either $b^{(2)} \ll B_{*4}$ or $b^{(2)} \ll	B_{*6}'$ so that either $b^{(1)}b^{(2)} a^{(1)} a^{(2)} \ll w_3$	or $ b^{(1)}a^{(1)} a^{(2)} b^{(2)} \ll w_3$.
			\end{enumerate}
			\item Subcase 2.3:  $a \in \{a_{2s-1},a_{2s}\}$, for $4 \le s \le k$. 
			\begin{enumerate}[label=\roman*)]
				\item If $b \in N(a_{2s})$ then  $b^{(s-2)} \ll \begin{cases}
					w_{(4,6)}, & \text{if } s = 4 ;\\
					w_{(2s-3)(2s-2)} ,& \text{if }  s >4,
				\end{cases}$ and $a^{(s-2)} a^{(s-1)} b^{(s-1)}\ll w_{(2s-1,2s)}$. Thus, $b^{(s-2)} a^{(s-2)} a^{(s-1)} b^{(s-1)}\ll w_3$.
				\item If $b \in N(a_{2s-1})$ then $b^{(s-2)} \ll \begin{cases}
					w_{(4,6)}, & \text{if } s = 4 ;\\
					w_{(2s-3)(2s-2)} ,& \text{if }  s >4, 
				\end{cases}$ and $b^{(s-1)} a^{(s-2)} a^{(s-1)} \ll w_{(2s-1,2s)}$. Thus, $b^{(s-2)}b^{(s-1)} a^{(s-2)} a^{(s-1)} \ll w_3$.
				\item If $b \not \in N(a_{2s-1}) \cup N(a_{2s})$ then 
				\begin{align*}
				 b^{(s-2)} &\ll \begin{cases}
					w_{(4,6)}, & \text{if } s = 4 ;\\
					w_{(2s-3)(2s-2)} ,& \text{if }  s >4,
				\end{cases}  			 				
				\end{align*}
				and either $b^{(s-1)} \ll B_{*(2s-1)}$ or $b^{(s-1)}	 \ll	B_{*(2s)}'$. Thus, either $b^{(s-2)}  b^{(s-1)} a^{(s-2)} a^{(s-1)}  \ll w_3$ or  $b^{(s-2)}a^{(s-2)} a^{(s-1)} b^{(s-1)}  \ll w_3$.
			\end{enumerate}	
		\end{itemize}
		
		\item[-] Case 3: $a,b \in B$.	The word $w_3$ is of the form  $w_{(1,3)}vv_0$. Clearly, $w_{(1,3)}|_{B} \ll w_{(1,3)}$, and by construction of $v_0$, we have ${w_{(1,3)}^R}|_B \ll v_0$. Since $a$ and $b$ occur exactly once in $w_{(1,3)}$, without loss of generality, assume $ab\ll w_{(1,3)}$ so that $w_3|_{{\{a,b\}}} = ab \cdots ba$. Since $w_3$ is a uniform word, both $a$ and $b$ must occur the same number of times. Thus, two $b$'s occur consecutively in $w_3|_{{\{a,b\}}}$. Thus, $a$ and $b$ do not alternate in $w_3$. 
	\end{itemize}
	Hence, $w_3$ represents $G$ so that $\mathcal{R}(G) \le k$.\qed
\end{proof}

\section{Conclusion}

In this paper, we provide an upper bound on the representation number of the bipartite graphs. We also improved the same for certain subclasses of the bipartite graphs. Using which, we established that the conjecture by Glen et al. on the representation number of bipartite graphs holds for most of the bipartite graphs. The only class of bipartite graphs for which the conjecture remains open is the bipartite graphs with partite sets of equal and even size. Even in this class, we proved the conjecture for some special subclasses using the neighborhood inclusion graph approach. In this approach, we found that the conjecture can be verified for some other special classes too. Accordingly, we believe that the conjecture holds for the remaining bipartite graphs. In the neighborhood graph approach, we have considered a specific chain cover of the poset associated with a partite set of a bipartite graph. By considering a smallest chain cover and adopting our method suitably, one may improve the upper bound on the representation number of various classes of bipartite graphs.

\appendix

\section{Demonstration of Construction given in Section \ref{const_single}}
\label{demo}

 	\begin{figure}[!h]
 		\centering
 		\begin{minipage}{.55\textwidth}
 			\centering
 			\begin{tikzpicture}[scale=0.5]				
 				\node (A) at (-0.2,0) [label=left:$B$] {};				
 				\node (B) at (-0.2,2) [label=left:$A$] {};			
 				\vertex (1) at (0,0) [label=below:$b_1$] {};  
 				\vertex (2) at (1,0) [label=below:$b_2$] {};
 				\vertex (3) at (2,0) [label=below:$b_3$] {};
 				\vertex (4) at (3,0) [label=below:$b_4$] {};
 				\vertex (5) at (4,0) [label=below:$b_5$] {};  
 				\vertex (6) at (5,0) [label=below:$b_6$] {};
 				\vertex (7) at (0,2) [label=above:$a_1$] {};
 				\vertex (8) at (1,2) [label=above:$a_2$] {};
 				\vertex (9) at (2,2) [label=above:$a_3$] {};
 				\vertex (10) at (3,2) [label=above:$a_4$] {};
 				\vertex (11) at (4,2) [label=above:$a_5$] {};  				
 				\path				
 				(1) edge (8)
 				(1) edge (9)
 				(2) edge (9)
 				(2) edge (10)
 				(3) edge (11)
 				(3) edge(7)				
 				(4) edge (7)
 				(4) edge (8)
 				(4) edge (9)
 				(4) edge (10)
 				(5) edge (8)
 				(5) edge (9)
 				(5) edge (10)
 				(5) edge (11)
 				(6) edge (7)
 				(6) edge (8)
 				(6) edge (9)	
 				(6) edge (10)
 				(6) edge (11);		
 			\end{tikzpicture}
 			\caption{A bipartite graph}
 			\label{fig_G1}
 		\end{minipage}
 	\end{figure}
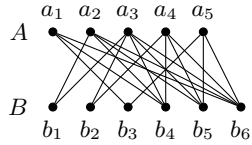
 	
 		Let $G = (A \cup B, E)$ be the bipartite graph given in Fig. \ref{fig_G1}. Consider the smaller partite set $A = \{a_1,a_2,a_3,a_4,a_5\}$. Since $A$ is of odd size,  add an isolated vertex  $a_6$ to the graph $G$ such that $N(a_6) = \varnothing$. Let $A_1 = A \cup \{a_6\}$.
 		 
 	\begin{enumerate}[label=\Roman*.]
 	\item Partition the set $A_1$ into pairs, i.e., $(a_1,a_2), (a_3,a_4), (a_5,a_6)$.
 	\item For each pair, we construct the following words:
 	\begin{enumerate}[label=\roman*.]
 		\item We have, $N(a_1) = \{b_3,b_4,b_6\}$ and $N(a_2) = \{b_1,b_4,b_5,b_6\}$. As per the construction, $B_{*1} = b_2$, $B_1=b_3$, $w_{(1,2)}'= b_4 b_6$, $B_{2}'=b_1b_5$ and $B_{*2}'$ is an empty word. Thus,  $$w_{(1,2)} = b_2   a_1  b_3   a_2   b_4   b_6  a_1   b_1   b_5  a_2.$$
 		\item We have, $N(a_3) = \{b_1,b_2,b_4,b_5,b_6\}$ and $N(a_4) = \{b_2,b_4,b_5,b_6\}$. As per the construction,  $B_{*3} = b_3$, $B_3=b_1$, $w_{(3,4)}'= b_2 b_4 b_5 b_6$, $B_{4}'$ and $B_{*4}'$ are empty words. Thus, $$w_{(3,4)} = b_3   a_3  b_1  a_4   b_2   b_4   b_5   b_6   a_3  a_4.$$
 		\item We have, $N(a_5) = \{b_3,b_5,b_6\}$ and $N(a_6) = \varnothing$. As per construction, $B_{*5} = b_1b_2b_4$, $B_5= b_3 b_5 b_6$, $w_{(5,6)}'$, $B_{6}'=b_1b_5$ and $B_{*6}'$ are empty words. Thus,
 		 $$w_{(5,6)} = b_1   b_2  b_4   a_5  b_3    b_5   b_6   a_6    a_5   a_6.$$
 	\end{enumerate}
 	Note that is in this process, if $b\in B$ is not in the neighborhood of a pair $( a_{2s-1},a_{2s})$, for some $s \in \{1,2,3\}$, we put $b$ in $B_{*(2s-1)}$ of  $w_{(2s-1,2s)}$.
 	\item Construct the permutations, $D_{(1:4)}, D_{(3:6)}$ and $D_{(5:2)}$, i.e., 
 	\begin{align*}
 		D_{(1:4)} & = a_6   a_5\\
 		D_{(3:6)} & = a_2 a_1\\
 		D_{(5:2)} & = a_4  a_3
 	\end{align*}
 	\item Construct the word $w_0$, i.e., \begin{align*}
 		w_0 &= a_2   a_1  w_{(1,2)}^R|_B   a_4    a_3   a_6   a_5\\
 		w_0 &= a_2 a_1  b_5  b_1 b_6 b_4 b_3  b_2 a_4 a_3  a_6 a_5
 	\end{align*}

 	\item Finally, we have the word $ w = w_{(1,2)}   D_{(1:4)}  w_{(3,4)}   D_{(3:6)}   w_{(5,6)}  D_{(5:2)}   w_0$  which represents $G \cup \{a_6\}$. 	
	$$ w  =  {\scriptsize b_2   a_1  b_3   a_2    b_4   b_6  a_1   b_1   b_5  a_2  a_6  a_5   b_3   a_3  b_1   a_4   b_2  b_4 b_5  b_6  a_3  a_4  a_2 a_1 b_1  b_2  b_4  a_5 b_3  b_5  b_6  a_6  a_5  a_6 a_4  a_3 a_2  a_1  b_5  b_1  b_6 b_4 b_3 b_2 a_4  a_3  a_6 a_5.}$$	
	
	\item Remove the vertex $a_6$ from $w$, we see that 
	
	$$w|_{(A\cup B)} = {\scriptsize b_2   a_1  b_3   a_2    b_4   b_6  a_1   b_1   b_5  a_2     a_5   b_3   a_3  b_1   a_4   b_2  b_4 b_5  b_6  a_3  a_4  a_2 a_1 b_1  b_2  b_4  a_5 b_3  b_5  b_6     a_5    a_4  a_3 a_2  a_1  b_5  b_1  b_6 b_4 b_3 b_2 a_4  a_3    a_5} $$
	is a word-representant of the bipartite graph $G$.
 	\end{enumerate}
\end{document}